\newcommand{\br}{\mathbb{R}}
\newcommand{\bz}{\mathbb{Z}}
\newcommand{\bt}{\mathbb{T}}
\newcommand{\be}{\beta}
\renewcommand{\(}{\left(}
\renewcommand{\)}{\right)}
\newcommand{\na}{\nabla}
\newcommand{\pa}{\partial}
\newtheorem{thm}{Theorem}[section]
\newtheorem{lem}[thm]{Lemma}
\newtheorem{prop}[thm]{Proposition}
\newtheorem{rem}[thm]{Remark}
\newtheorem{defn}[thm]{Definition}
\numberwithin{equation}{section}
\newcommand\reallywidehat[1]{%
\savestack{\tmpbox}{\stretchto{%
  \scaleto{%
    \scalerel*[\widthof{\ensuremath{#1}}]{\kern-.6pt\bigwedge\kern-.6pt}%
    {\rule[-\textheight/2]{1ex}{\textheight}}
  }{\textheight}%
}{0.5ex}}%
\stackon[1pt]{#1}{\tmpbox}%
}
\DeclareMathAlphabet{\pazocal}{OMS}{zplm}{m}{n}
\title{Backward problem for the 1D ionic Vlasov-Poisson equation}
\author{Antoine Gagnebin
\thanks{ETH Z\"urich. Email: \textsf{antoine.gagnebin@math.ethz.ch}}}
\date{\today}
\providecommand{\subjclass}[1]
{
  \small	
  \textbf{AMS subject classifications.} #1
}
\begin{document}

\maketitle

\begin{abstract} In this paper, we study the backward problem for the one-dimensional Vlasov-Poisson system with massless electrons, and we show the Landau damping by fixing the asymptotic behaviour of our solution. 
\end{abstract}

\subjclass{82D10, 35Q83, 35B40}

\section*{Introduction}

This work concerns the study of plasma physics.  Plasma is considered the fourth state of matter,  and it consists of an ionised gas in which electrons have separated from atoms due to high temperatures (see, e.g.  \cite{plasma_2003, Ryutov_1999, Villani_notes}).  Since ions are much larger and heavier than electrons, it is natural to consider different timescales when we want to track the evolution of the two species. 

A classical model for the evolution of electrons is the Vlasov-Poisson (VP) system. It is a well-known model in kinetic theory. It describes the motion of electrons in plasma when we neglect collisions and magnetic effects and consider the ions as a stationary background. This last assumption makes sense because the ions are much heavier than the electrons. Therefore, they move slowly compared to the motion of electrons and can be considered fixed. The one-dimensional (VP) system reads as follows: 
\begin{eqnarray}
\label{VP_intro}
\mbox{(VP) } := 
    \left\{
    \begin{array}{l}
    \partial_t f(t,x,v) + v \cdot \pa_x  f (t,x,v)  + E(t,x) \cdot \pa_v f(t,x,v) = 0, \\
    E (t,x) = - \pa_x U (t,x), \\
     \pa_{xx}^2 U (t,x) =  \rho_i - \rho (t,x),\\
    \rho(t,x) = \int_{\mathbb{R}} f(t,x,v) \ dv.
    \end{array}
    \right.
\end{eqnarray}
The unknown $f=f(t,x,v)$ represents the distribution function of electrons at time $t$, position $x$, and velocity $v$,  where $(t,x,v) \in \br \times \bt \times \br$.   The density of electrons $\rho = \rho (t,x)$ is the average in the velocity variable.  The density of ions, as said before, is fixed, i.e. $\rho_i = 1$.  The electric field $E = E(t,x)$ is generated by the collective behaviour of electrons and by the stationary background of ions.  The (VP) system has been extensively studied over the past decades, and a vast literature of results is already available.  For global well-posedness of classical solutions on the whole space,  we refer to the work of S. V. Iordanskii \cite{Iordanskii} in dimension one , S. Ukai and T. Okabe \cite{Ukai_Okabe} in dimension two, and independently by P.-L. Lions and B. Perthame \cite{Lions_Perthame}, and K. Pfaffelmoser \cite{Pfaffelmoser} in the three-dimensional case, (see also \cite{Bardos_Degond_2, Schaffer}).  For weak solutions on the whole space, we refer to the work of A. A. Arsenev \cite{Arsenev}, (see also \cite{Bardos_Degond_1, Bardos_Degond_Golse, Horst_Hunze}). J. Batt and G. Rein \cite{Batt_Rein} proved the global well-posedness on the torus,  (see also \cite{Chen, Pallard}).  Finally, Loeper \cite{Loeper} made a major improvement to the uniqueness theory,  (see also \cite{Miot}).

On the other hand, if we look at the evolution of ions in the plasma, we cannot consider the electrons as a fixed background.  Since  electrons move faster than  ions, it becomes relevant to consider electron-electron collisions in the model.  That is,  electrons are modelled by a collisional kinetic model and it is natural to assume that the distribution of electrons rapidly reaches a thermodynamic equilibrium. In summary, in the ion's timescale, the resulting density of electrons is given by a Maxwell-Boltzmann distribution.  Under these assumptions, we can derive the Vlasov-Poisson system for massless electrons (VPME), sometimes referred to as ionic Vlasov-Poisson or Vlasov-Poisson equation for ions with massless thermalized electrons. We refer to \cite{Bardos} for a rigorous derivation of (VPME) and 
\cite{Griffin_Iacobelli_summary} for more details on this model.  The (VPME) system consists of a Vlasov equation coupled with a nonlinear Poisson equation modelling how the electric potential is generated by the distribution of the ions and the Maxwell-Boltzmann distribution of the electrons.

In this paper, we are interested in the one-dimensional (VPME) on the torus $\bt$, which is given by:
\begin{eqnarray}
\label{VPME_intro}
\mbox{(VPME) } := 
    \left\{
    \begin{array}{l}
    \partial_t f(t,x,v) + v \cdot \pa_x  f(t,x,v)  + E (t,x) \cdot \pa_v f(t,x,v) = 0, \\
    E (t,x) = - \pa_x U (t,x), \\
     \pa_{xx}^2 U (t,x) = e^{U (t,x)} - \rho (t,x),\\
    \rho(t,x) = \int_{\mathbb{R}} f(t,x,v) \ dv.
    \end{array}
    \right.
\end{eqnarray}
The unknown $f=f(t,x,v)$ represents the distribution function of ions at time $t$, position $x$, and velocity $v$,  where $(t,x,v) \in \br \times \bt \times \br$.   The density of ions $\rho = \rho (t,x)$ is the average in the velocity variable.  The electric field $E = E(t,x)$ is generated by the collective behaviour of ions and by the thermal equilibrium of electrons. 

The (VPME) system has been less studied due to the additional nonlinearity in the Poisson equation.  The first work on global weak solutions is from F.  Bouchut \cite{Bouchut}. He constructed weak solutions globally in time in the whole space in dimension three. In the one-dimensional setting, weak solutions were constructed globally in time by D.  Han-Kwan and M. Iacobelli \cite{Han-Kwan_Iacobelli}. The global well-posedness has then been proved by M. Griffin-Pickering and M. Iacobelli in the case of the whole space in dimension three \cite{Griffin_Iacobelli_R}  and of the torus in dimension two and three \cite{Griffin_Iacobelli_torus}.  They showed existence of strong solutions for measure initial data with bounded moments and uniqueness with bounded density.  Recently, L. Cesbron and M. Iacobelli showed the global well-posedness of (VPME) in bounded domains \cite{Cesbron_Iacobelli}. 

This work aims to investigate  the backward problem of the system (\ref{VPME_intro}) by fixing the asymptotic distribution of $f$. Namely, let $f^*$ be a given distribution function that satisfies some decay and smoothness properties (see Definition \ref{def_f*} below), then we assume that for large $t$, our solution $f$ is close to the solution of the free transport $f^*(x-vt, v)$, i.e.
\begin{equation}
\label{asymptotic_cond_intro}
	\| f(t,x,v) - f^*(x-vt, v) \|_{L^{\infty} (\bt \times \br)} \longrightarrow 0 \qquad \mbox{as } t\rightarrow + \infty.
\end{equation}
It means that the flow induced by the electric field of the system (\ref{VPME_intro}) (see Definition \ref{def_flow}) is asymptomatically free.  We refer to the backward problem because we replace the usual initial datum $f^0(x,v) = f(0,x,v)$ by imposing an asymptotic state for our solution $f$. The idea behind assumption (\ref{asymptotic_cond_intro}) is the discovery of L. Landau in \cite{Landau}. He proved that the electric field of (VP) linearised around a Maxwellian homogeneous equilibrium is damped as time goes to infinity. Therefore, when the electric field of the system (\ref{VP_intro}) vanishes, it transforms into a transport equation.

The Landau damping, which refers to the decay of the electric field for large times, is a well-known collisionless relaxation phenomenon in kinetic theory. Several works extend and complete the pioneering result of L. Landau \cite{Landau}.  First, O. Penrose \cite{Penrose} extended the result of L. Landau for general spatially homogeneous equilibria.  Then, C. Mouhot and C. Villani proved, in their celebrated work \cite{CM_CV}, the Landau damping for the nonlinear (VP) system. They treated the perturbative regime\footnote{In this work, we refer to a perturbative regime as a solution of the form $\mu(v) + f(t,x,v)$,  where $\mu$ is a spatial homogeneous equilibrium and $f$ is a a small perturbation. } for the Cauchy problem with analytic and Gevrey initial data.
Their proof was then simplified and extended to Gevrey-$\frac{1}{3}$ initial data by J.  Bedrossian, N.  Masmoudi, and C.  Mouhot in \cite{BMM_13}. Recently, E. Grenier, T. Nguyen, and I. Rodianski gave a shorter proof for analytic and Gevrey initial condition in \cite{GNR}.  All these results mentioned above concern the forward problem, which provides the solution for the Cauchy problem for a given initial datum $f^0(x,v) = f(0,x,v)$, in a perturbative regime. 

Concerning the backward problem,  which provides the solution to the scattering problem for a given $f^*$, E.  Caglioti and C.  Maffei \cite{EC_CM} proved the damping of the electric field for the nonlinear (VP) in the scattering setting (\ref{asymptotic_cond_intro}).  Then, H. J. Hwang and J. -L. L. Vel\'{a}zquez \cite{Hw_Ve} gave a more sophisticated proof of the damping and obtained a larger class of possible asymptotic limits for $f^*$.  Recently,  D. Benedetto, E. Caglioti, and S. Rossi \cite{BCR} studied the backward and forward Landau damping problem for the Vlasov-HMF equation and compared these two approaches.

Very recently,  M. Iacobelli and the author  \cite{GI} proved the Landau damping for (VPME) on the torus for the Cauchy problem. They showed the decay of the electric field as time goes for analytic and Gevrey initial data in a perturbative regime.  Concerning the Landau damping for (VPME) on the whole space, recently, L. Huang, Q.-H. Nguyen, Y. Xu \cite{huang_sharp_2022} proved the decay of the density in dimension three and higher. The same group also treated the two-dimensional case in \cite{huang_2d}.

In this paper, we extend the method of E.  Caglioti and C.  Maffei \cite{EC_CM} to the case of (VPME).  The main ingredients in their work are the study of the flow from a Lagrangian description, the use of an iterative scheme on the Vlasov equation and a fixed point argument on the electric field.   We use the same techniques for our proof but with the added difficulty of dealing with a nonlinear coupling for the Poisson equation. To tackle this difficulty,  we use a decomposition of the electric field introduced by D.  Han-Kwan and M. Iacobelli in \cite{Han-Kwan_Iacobelli}  (see also \cite{Griffin_Iacobelli_R, Griffin_Iacobelli_torus}, where they used the same decomposition). The paper is thus organised as follows. In Section \ref{section_def} we give some definitions needed for the clarity of the paper, and we state our main theorem.  In Section \ref{section_E} we recall some general facts about the electric field of the system (\ref{VPME_intro}) and its decomposition.  We also show an important result on the potential $U$.  In Section \ref{section_proof} we prove the main result of this paper. Finally, in Section \ref{section_unstable} we give a class of stationary solutions, which are unstable in the weak topology.

\section{Definitions and main theorems}
\label{section_def}
We focus on solutions of the one-dimensional (VPME) on the torus $\bt$, that is 
\begin{eqnarray}
\label{VPME}
    (\mbox{VPME})=
    \left \{
    \begin{array}{l}
    \partial_t f + v \cdot \pa_x  f  + E \cdot \pa_v f = 0, \\
    E  = - \pa_x U, \\
     \pa_{xx}^2 U = e^{U} - \rho,\\
    \rho = \int_{\mathbb{R}} f \ dv.
    \end{array}
    \right.
\end{eqnarray}
We assume that $f$ satisfies the asymptotic condition,
\begin{equation}
\label{asymptotic_cond}
	\| f(t,x,v) - f^*(x-vt, v) \|_{L^{\infty} (\bt \times \br)} \longrightarrow 0 \qquad \mbox{as } t\rightarrow + \infty,
\end{equation}
for a given asymptotic datum $f^*$ as in Definition \ref{def_f*}.

We  work on the phase space $\Omega = \bt \times \br$. We identify the torus $\bt$  to the interval $[-\frac{1}{2}, \frac{1}{2})$ with periodic boundary conditions. If $z=(x,v)$ and $z'=(x',v') \in \Omega$, we define the norm
\begin{equation*}
	\vert z - z' \vert := \vert x-x' \vert_\bt + \vert v-v' \vert,
\end{equation*}
where $\vert x-x' \vert_\bt$ is understood as 
\begin{equation*}
	\vert x-x' \vert_\bt := \min_{k \in \bz} \vert x-x' + k \vert,
\end{equation*}
but we will omit the subscript $\bt$ for simplicity.

As said in the introduction, we use the decomposition of the electric field given in \cite{Han-Kwan_Iacobelli}.
\begin{defn}
\label{def_E}
	We define the two vector fields $\bar{E}$ and $\widetilde{E}$ such that $E=\bar{E} + \widetilde{E}$, 
\begin{equation*}
	\bar{E} = - \pa_x \bar{U},  \qquad \mbox{and} \qquad \widetilde{E} = - \pa_x \widetilde{U},
\end{equation*}
where $\bar{U}$ and $\widetilde{U}$ solve
\begin{equation*}
	 \pa_{xx}^2 \bar{U} = 1 - \rho,  \qquad \mbox{and} \qquad \pa_{xx}^2 \widetilde{U} = e^{\bar{U}+ \widetilde{U}} -1.
\end{equation*}
\end{defn}

In the next definition, we recall the concept of the characteristic curves and the flow induced by the electric field.
\begin{defn}
\label{def_flow}
We define the characteristic curves $X(t,x,v)$ and $V(t,x,v)$ of the system (\ref{VPME}) as follows
\begin{eqnarray}
\label{cc_VPME}
    \left \{
    \begin{array}{l}
    \dot{X} (t,x,v) = V(t,x,v),\\
    \dot{V}(t,x,v) = \bar{E}(t,X(t,x,v)) + \widetilde{E}(t,X(t,x,v)),\\
    \lim_{t \to \infty} X(t,x,v) - V(t,x,v)t = x,\\
    \lim_{t \to \infty} V(t,x,v) = v,
    \end{array}
    \right.
\end{eqnarray}
where $\bar{E}$ and $\widetilde{E}$ satisfies Definition \ref{def_E}. Moreover we define the flow of the characteristic curves (\ref{cc_VPME}) as $\phi_t (x,v) = (X(t,x,v), V(t,x,v))$. 
\end{defn}

\begin{rem}
	We observe that the characteristic curves are defined with an asymptotic condition instead of an initial condition.
\end{rem}

\begin{defn}
\label{homogeneous}
	We say that a solution $f$ of (\ref{VPME}) becomes homogeneous if there exists a function $h \in L_v^1 (\br)$ such that $f$ converges weakly to $h$.  That is for every test functions $\varphi \in C_c^0 (\Omega)$ we have 
	\begin{equation*}
		\lim_{t \rightarrow \infty} \int_\Omega \varphi (x,v) f(t,x,v) \ dx  \ dv = \int_\Omega \varphi (x,v) h(v) \ dx  \ dv.
	\end{equation*}
\end{defn}

In the next definition, we state the necessary assumptions for the asymptotic distribution $f^*$.
\begin{defn}
\label{def_f*}
Let $a,  a_1$,  $a_2$ and $\alpha$ be positive constants, we say that $f^* \in S_{a,a_1,a_2}$, if $f^* \geq 0$,
\begin{equation}
\label{Fourier transform f*}
	\vert \widehat{f^*}(k,\eta)  \vert \leq  \frac{e^{-12}}{4} \frac{1}{1 +\vert k \vert^\alpha} e^{-a\vert \eta \vert},
\end{equation}
where $\alpha \in (0,1)$ is a positive constant such that $\sum_{k=1}^\infty  \vert k \vert^{-(1 +\alpha)} \leq a_1$ and $\widehat{f^*}$ is the Fourier transform of $f^*$,  i.e.
\begin{equation*}
	 \widehat{f^*}(k,\eta) =  \int_\bt \int_\br f^*(x,v) e^{-2 \pi ikx} e^{-i \eta v} \ dx \ dv,
\end{equation*}
and 
\begin{equation}
\label{bdd f*}
	\vert f^*(x,v) \vert \leq \frac{a_2}{1+v^4}.
\end{equation}
\end{defn}

\begin{rem}
The property (\ref{Fourier transform f*}) requires that $f^*$ is  Hölder continous with exponent $\alpha$ in the $x$-variable and analytic in the $v$-variable. The decay property (\ref{bdd f*}) ensures that the density $\rho$ of our solution will remain in $L^1 \cap L^\infty (\bt)$. 
\end{rem}

To show the decay of the electric field,  we use the following norm which encodes the exponential decay in time if the norm is finite.
\begin{defn}
\label{def_norm}
Let $t_0 \geq 0$,  $a$ be the constant given in Definition \ref{def_f*}, and $F(t,x)$ be a vector field. We define the norm
	\begin{equation*}
		\|  F  \|_{a,t_0} := \sup_{t \geq t_0} e^{at} \|  F(t)  \|_{L^{\infty}(\bt)}.
	\end{equation*}
\end{defn}

With these definitions above, we can now state our main Theorem.

\begin{thm}
\label{thm1}
	Let $f^* \in S_{a,a_1,a_2}$ be a given asymptotic datum such that $a^2 \geq (112 a_2 + 3) (4 e^{12}+1)$, and let $t_0 \geq \max(0, \frac{1}{a} \log (8a_1 a_2))$. Then for $t \geq t_0$ there exists an electric field $E$ and a flow $\phi_t$ solution of (\ref{cc_VPME}) and there exists a weak solution of (\ref{VPME}) such that
\begin{itemize}
\item[i)] $f$ is given by 
\begin{equation}
	f(t,x,v) = f^*(\phi_t^{-1} (x,v)),
\end{equation}
\item[ii)] $f$ satisfies the  asymptotic condition (\ref{asymptotic_cond}),
\item[iii)] $f$ becomes homogeneous in the sense of Definition (\ref{homogeneous}) with weak limit
\begin{equation*}
	h(v) = \int_\bt f^* (x,v) \ dx.
\end{equation*}
\end{itemize}	
Moreover the electric field $E$ vanishes as t goes to infinity exponentially fast and we have the following bound
\begin{equation*}
	\|  E  \|_{a,t_0} \leq 16 a_1.
\end{equation*}
\end{thm}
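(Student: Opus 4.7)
The plan is to construct $E$ as the fixed point of a map $\mathcal{T}$ acting on the closed ball $\mathcal{B}:=\{F:\|F\|_{a,t_0}\le 16 a_1\}$ inside the space of time-dependent vector fields on $\bt$ equipped with the norm of Definition~\ref{def_norm}. Given $E\in\mathcal{B}$, I would first build the associated flow $\phi_t=(X,V)$ as the unique fixed point of the integral version of \eqref{cc_VPME},
\begin{equation*}
V(t,x,v)=v-\int_t^\infty E(s,X(s,x,v))\,ds,\qquad X(t,x,v)=x+vt+\int_t^\infty (s-t)\,E(s,X(s,x,v))\,ds,
\end{equation*}
which builds in the backward asymptotic conditions $V\to v$ and $X-Vt\to x$. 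The exponential weight $e^{as}$ in $\|E\|_{a,t_0}$ makes the kernels $e^{-as}$ and $(s-t)e^{-as}$ integrable on $[t,\infty)$ and small once $t\ge t_0$, so a Picard argument produces $\phi_t$ as a diffeomorphism of $\Omega$ close to the free flow.

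I would then define $f(t,\cdot):=f^*\circ\phi_t^{-1}$, which is automatically a Lagrangian (hence weak) solution of the transport equation forced by $E$, form $\rho(t,x)=\int_\br f\,dv$, and produce $\mathcal{T}(E):=\bar{E}+\widetilde{E}$ by solving the pair of Poisson-type equations of Definition~\ref{def_E}. For the linear piece $\bar{E}$, Fourier expansion on $\bt$ combined with the Lagrangian formula gives $|\widehat{1-\rho}(k,t)|\lesssim|\widehat{f^*}(k,2\pi k t)|$ (exact equality for the free flow, perturbative estimate near it), and \eqref{Fourier transform f*} then yields $|\widehat{1-\rho}(k,t)|\le e^{-6}(1+|k|)^{-\alpha}e^{-2\pi a|k|t}$. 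Dividing by $|k|$ for $\bar{E}$, summing in $k$, and using $\sum_{k\ge 1}|k|^{-(1+\alpha)}\le a_1$, one recovers a bound of the expected $O(a_1)$ size for $\|\bar{E}\|_{a,t_0}$.

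The main obstacle is controlling the nonlinear correction $\widetilde{E}$, since $\widetilde{U}$ satisfies the nonlinear elliptic equation $\partial_{xx}^2\widetilde{U}=e^{\bar{U}+\widetilde{U}}-1$. This is precisely the role of the preparatory estimate on $U$ announced for Section~\ref{section_E}: I expect it to provide a bound of the shape $\|\widetilde{E}(t)\|_\infty\lesssim e^{C\|\bar{U}(t)\|_\infty}\|\bar{U}(t)\|_\infty^2$, making $\widetilde{E}$ a higher-order correction of $\bar{E}$. The numerical conditions $a^2\ge(200 a_2+3)(e^6+1)$ and $t_0\ge\tfrac{1}{a}\log(8 a_1 a_2)$ are calibrated exactly so that this nonlinear correction, together with the error in approximating $\phi_t$ by the free flow in the estimate for $\bar{E}$, fits within the ball $\mathcal{B}$ and closes the bound $\|\mathcal{T}(E)\|_{a,t_0}\le 16 a_1$.

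Applying the same estimates to $\mathcal{T}(E_1)-\mathcal{T}(E_2)$ via the Lipschitz dependence of $\phi_t$ on $E$ and of the nonlinear Poisson solver on $\rho$ gives strict contraction and hence a fixed point $E$, from which (i) is immediate. Claim (ii) then follows from $\phi_t^{-1}(x,v)\to(x-vt,v)$ uniformly on bounded velocity sets together with the decay \eqref{bdd f*} of $f^*$; and claim (iii) is obtained from the change of variables $\int_\Omega\varphi f\,dx\,dv=\int_\Omega\varphi(\phi_t(y,w))f^*(y,w)\,dy\,dw$, passing to the limit using $\phi_t(y,w)\approx(y+wt,w)$ modulo $\bt$ and integrating out $x$ on the torus to identify the weak limit $h(v)=\int_\bt f^*(x,v)\,dx$.
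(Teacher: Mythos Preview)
Your overall architecture---a contraction/fixed-point on the electric field, building the flow from $E$, transporting $f^*$ along it, and closing via the splitting $E=\bar E+\widetilde E$---is exactly the paper's strategy (written there as an explicit iterative scheme $E_n\mapsto E_{n+1}$). Two points, however, diverge from the paper in ways that matter.

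\textbf{The estimate on $\widetilde E$ is linear, not quadratic.} You expect the preparatory lemma to give $\|\widetilde E\|_\infty\lesssim e^{C\|\bar U\|_\infty}\|\bar U\|_\infty^2$, so that $\widetilde E$ is a higher-order correction. This is not what Lemma~\ref{lem_L_infty_bound} provides: it gives the \emph{linear} Lipschitz bound
\[
\|\partial_x\widetilde U_1-\partial_x\widetilde U_2\|_{L^\infty}\le e^6\,\|\bar U_1-\bar U_2\|_{L^\infty},
\]
and in particular (taking $\bar U_2=0$, $\widetilde U_2=0$) $\|\widetilde E\|_\infty\le e^6\|\bar U\|_\infty$. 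Thus $\widetilde E$ is of the \emph{same} order as $\bar E$, and the factor $e^6$ is precisely why the hypothesis \eqref{Fourier transform f*} carries the prefactor $e^{-6}$ and why the smallness condition on $a^2$ carries the factor $(e^6+1)$. Your picture of $\widetilde E$ as negligibly quadratic would not reproduce these constants, and more importantly the contraction step for $\widetilde E$ must be run through this linear Lipschitz estimate, not a smallness-of-$\bar U$ argument.

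\textbf{The contraction for $\bar E$ is done in physical space, not in Fourier.} The Fourier identity $\widehat{\rho_1}(t,k)=\widehat{f^*}(k,kt)$ is used only for the free-flow base case $n=1$; your proposed ``perturbative estimate near it'' for $\widehat{1-\rho}$ at the nonlinear flow is not how the paper closes. For the induction/contraction the paper works in physical space with the convolution formulas $\bar E_i=-W'\!*\rho_i$, $\bar U_i=W*\rho_i$, writes $\rho_i(t,\cdot)=\int f^*(\phi_t^{(i)\,-1}(\cdot,v))\,dv$, and controls $\|\bar E_{n+1}-\bar E_n\|_{a,t_0}$ and $\|\bar U_{n+1}-\bar U_n\|_{a,t_0}$ through the characteristic estimate $\|X_{n+1}-X_n\|_{a,t_0}\le (a^2-(24a_2+3))^{-1}\|E_n-E_{n-1}\|_{a,t_0}$ (step~6--7 of \cite[Lemma~3.1]{EC_CM}). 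This is where the factor $88a_2/(a^2-(24a_2+3))$ comes from; combined with the $e^6$ from Lemma~\ref{lem_L_infty_bound} one gets the contraction constant $(e^6+1)\cdot 88a_2/(a^2-(24a_2+3))\le\tfrac12$ under the stated hypothesis on $a^2$. A direct Fourier approach to $\rho$ along the perturbed flow would require analyticity-type control of $\phi_t^{-1}$ that is neither established nor needed.

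Finally, note that the paper also carries along the iteration the Lipschitz bound $|E_n(t,x)-E_n(t,x')|\le(24a_2+3)|x-x'|$; this is what guarantees existence/uniqueness of the flow at each step and feeds into the characteristic estimate above. Your sketch should include this bookkeeping.
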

The solution constructed above is not classical because $f^*$ is only Hölder continuous with respect to the $x$-variable. If we assume additional regularity on $f^*$ we have a classical solution.
\begin{thm}
\label{thm2}
	Let us suppose that $f^*$ satisfies the assumptions of  Theorem \ref{thm1}. Moreover assume that $f^* \in C^1(\Omega)$,
	\begin{equation}
\label{Fourier transform f* 2}
	\vert \widehat{f^*}(k,\eta)  \vert \leq \frac{e^{-12}}{4} \frac{a_1}{1 + k^2} e^{-a\vert \eta \vert},
\end{equation}
and
	\begin{equation}
		\vert \na_{x,v} f^* (x,v)  \vert \leq \frac{C}{1+v^2},
	\end{equation}
where $C$ is a positive constant. Then the solution $f= f^* \circ \phi_t^{-1}$ is a $C^1$ solution of (\ref{VPME}).
\end{thm}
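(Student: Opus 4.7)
The strategy is to apply Theorem~\ref{thm1} to obtain the solution $f = f^*\circ\phi_t^{-1}$, then to upgrade the regularity of the electric field to $C^1$ in $x$ using the stronger Fourier decay \eqref{Fourier transform f* 2}, from which the characteristic flow inherits $C^1$-regularity, and finally to verify that $f$ satisfies \eqref{VPME} in the classical sense. The exponential decay bound $\|E\|_{a,t_0}\le 16 a_1$ provided by Theorem~\ref{thm1} will be used repeatedly to justify smallness of the potential.

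The first and main step is to show that $E(t,\cdot)\in C^1(\bt)$ with exponential time decay. I would revisit the fixed-point argument from Theorem~\ref{thm1} but measure the electric field in the weighted Wiener norm $\sum_k(1+|k|)|\hat E(t,k)|$ rather than a mere supremum or Hölder norm. In Lagrangian coordinates the density admits the representation
\begin{equation*}
\hat\rho(t,k) \;=\; \int_\Omega f^*(x,v)\,e^{-2\pi i k X(t,x,v)}\,dx\,dv,
\end{equation*}
and writing $X(t,x,v)=x+vt+R(t,x,v)$ with $R$ controlled by $\|E\|_{a,t_0}$, an expansion of the exponential together with \eqref{Fourier transform f* 2} yields an estimate of the form $|\hat\rho(t,k)|\lesssim e^{-2\pi a|k|t}/(1+k^2)$. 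By Definition~\ref{def_E} this immediately gives $|\hat{\bar E}(t,k)|\lesssim e^{-2\pi a|k|t}/(|k|(1+k^2))$ for $k\neq 0$, while the nonlinear correction $\widetilde E$ is controlled by treating the equation $\partial_{xx}\widetilde U=e^{\bar U+\widetilde U}-1$ as a perturbation of the linear Poisson equation and applying a Banach-algebra fixed-point argument on the same weighted Wiener space, exploiting the smallness of $\bar U$ and $\widetilde U$ inherited from the bound on $E$. Summing, $\sum_k(1+|k|)|\hat E(t,k)|$ is finite and decays exponentially, so $E(t,\cdot)\in C^1(\bt)$ uniformly in $t\ge t_0$.

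With $E(t,\cdot)$ $C^1$ in $x$ and its Lipschitz constant bounded (in fact decaying) in $t$, standard Cauchy--Lipschitz theory for the ODE system \eqref{cc_VPME}, combined with the asymptotic boundary condition handled as in the proof of Theorem~\ref{thm1}, gives that $\phi_t$ and $\phi_t^{-1}$ are $C^1$-diffeomorphisms of $\Omega$. Since $f^*\in C^1(\Omega)$, the composition $f=f^*\circ\phi_t^{-1}$ is $C^1$ in $(x,v)$. The $C^1$-regularity in $t$ follows from differentiating the identity $f(t,\phi_t(x,v))=f^*(x,v)$ and using $(\dot X,\dot V)=(V,E(t,X))$, which also yields the Vlasov equation $\partial_t f+v\,\partial_x f+E\,\partial_v f=0$ pointwise. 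The Poisson equation is satisfied by construction of $E$ in Definition~\ref{def_E}, and the decay $|\nabla_{x,v}f^*|\le C/(1+v^2)$ legitimizes differentiation under the integral in $\rho(t,x)=\int f\,dv$, ensuring continuity of $\partial_x\rho$ and consistency with the Fourier estimates of Step 1.

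The principal obstacle is Step 1: propagating the improved Fourier decay through the \emph{nonlinear} component $\widetilde E$. Unlike $\bar E$, which gains a factor $1/|k|$ directly from $\partial_{xx}\bar U=1-\rho$, the exponential nonlinearity must be treated by a Banach-algebra estimate on the weighted Wiener space, carefully combining smallness of $U$ (guaranteed by $\|E\|_{a,t_0}\le 16 a_1$) with the decay of the source $\bar U$ to close the contraction. Once the $C^1$-estimate on $E$ is established, the remaining arguments --- $C^1$-dependence of the flow on initial data, composition with $f^*$, and verification of the PDE --- are routine.
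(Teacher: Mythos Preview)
The paper does not give a detailed proof of this theorem: in the remark immediately following the statement it says the argument is analogous to Theorem~3.4 of \cite{EC_CM} and follows once Theorem~\ref{thm1} is established. Your overall outline --- upgrade $E$ to $C^1$ in $x$, deduce $C^1$ regularity of the flow and its inverse via Cauchy--Lipschitz, then compose with $f^*\in C^1(\Omega)$ --- is the expected structure and matches what \cite{EC_CM} does for the classical (VP) case.

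However, you have misidentified the principal obstacle. The nonlinear piece $\widetilde E$ is \emph{not} the difficulty: directly from the equation $\pa_{xx}^2\widetilde U = e^{\bar U+\widetilde U}-1$ one reads off $\pa_x\widetilde E = 1 - e^{\bar U+\widetilde U}$, and since $\bar U$ and $\widetilde U$ are both Lipschitz (see the bounds recalled in Section~\ref{section_E}), $\pa_x\widetilde E$ is automatically continuous --- indeed Lipschitz --- with no further fixed-point argument required. The paper itself remarks in Section~\ref{section_E} that $\widetilde E$ is \emph{more} regular than $\bar E$. Your proposed Banach-algebra contraction on the weighted Wiener space is therefore unnecessary for this component and can be dropped entirely.

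The actual work lies in $\bar E$: one needs $\rho$ continuous so that $\pa_x\bar E=\rho-1$ is, and it is here that the decay hypothesis $|\na_{x,v}f^*|\le C/(1+v^2)$ and the stronger Fourier assumption \eqref{Fourier transform f* 2} enter, via the Lagrangian representation you wrote down together with differentiation under the integral. Once $E\in C^1$ uniformly on $[t_0,\infty)$, the remainder of your argument is routine, as you correctly note.
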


\begin{rem}
	The proof of Theorem \ref{thm2} is similar to the proof of Theorem 3.4 in \cite{EC_CM} and will follow once we have proved Theorem \ref{thm1}. 
\end{rem}

\begin{rem}
We note that assumption (\ref{Fourier transform f* 2}) is the condition given in Definition 3.1 in \cite{EC_CM}.  For Theorem \ref{thm1}, it is not necessary to impose assumption (\ref{Fourier transform f* 2}) but it is sufficient to have our condition (\ref{Fourier transform f*}).
\end{rem}

\section{General facts on the electric field}
\label{section_E}
With the splitting of the electric field, $\bar{E}$ is the electric field in the classical (VP) system and can be written as a convolution between the fundamental solution of the Laplace equation on $\bt$ and the density $\rho$. That is, we have the following formula for $\bar{E}$,
\begin{equation*}
	\bar{E}(t,x) = - \int_\bt W'(x-y) \rho(t,y) \ dy,
\end{equation*}
where 
\begin{equation*}
	W(x) = \frac{x^2-\vert x \vert}{2}.
\end{equation*}
We observe that $\| W \|_{L^{\infty}(\bt)} \leq 1$ and $W$ is 1-Lipschitz. Moreover, if we identify\footnote{Without loss of generality we can identify $\bt$ to  $[-\frac{1}{2}, \frac{1}{2})$ or $[0,1)$ by doing a translation. We use $[0,1)$ in this case to simplify the notation of $W'$.} $\bt$ to $[0,1)$, we have
\begin{equation*}
	W'(x) = x - \frac{1}{2},  \qquad \mbox{and} \qquad W'(x+1) = W'(x). 
\end{equation*}
Note that $W'$ is discontinuous at $x=k$ for $k \in \mathbb{Z}$.  In addition,
\begin{equation}
\label{bound_W'}
	\| W' \|_{L^{\infty}(\bt)} \leq 1,
\end{equation}
and if the interval $[x,x']$ does not contain $k$ for any $k \in \bz$, then
\begin{equation}
\label{lipschitz_W'}
	\vert  W'(x) - W'(x')  \vert \leq \vert x-x' \vert.
\end{equation}
For $\widetilde{E}$ we do not have an explicit formula, but we can show that $\widetilde{E}$ is more regular that $\bar{E}$, see \cite[Section 3]{Griffin_Iacobelli_torus}, for references. In the following, we recall some results about $\bar{U}$ and $\widetilde{U}$. First, $\bar{U}$ is given by
\begin{equation*}
	 \bar{U} (t,x) = \int_\bt W(x-y) \rho(t,y) \ dy.
\end{equation*}
For $\rho \in L^1 \cap L^\infty (\bt)$,  we see that $\| \bar{U} \|_{L^{\infty}(\bt)} \leq 1$ and $ \bar{U}$ is Lipschitz.  Concerning $\widetilde{U}$,  again, we do not have an explicit formula, but we still have existence and unicity of a strong solution $\widetilde{U}$ with  bounds on the $L^\infty$ norm of $\widetilde{U}$ and its derivatives. This result is given by Lemma 2.2 in \cite{Han-Kwan_Iacobelli}.
\begin{lem}
\label{lemma widetilde U}
There exists a unique solution on $\bt$ of 
 \begin{equation*}
 	 \pa_{xx}^2 \widetilde{U} = e^{\bar{U}+\widetilde{U}} - 1,
 \end{equation*}
 and this solution satisfies the three following bounds,
 \begin{equation*}
 	\| \widetilde{U} \|_{L^{\infty}(\bt)} \leq 3 , \qquad \| \pa_x \widetilde{U} \|_{L^{\infty}(\bt)} \leq 2,  \qquad\| \pa_{xx}^2 \widetilde{U} \|_{L^{\infty}(\bt)} \leq 3.
 \end{equation*} 
\end{lem}
By comparison with Proposition 3.1 \cite{Griffin_Iacobelli_torus}, we observe that in dimension one, we have better regularity result on $\widetilde{U}$ than in higher dimension. 

The last goal of this section is to show a $L^\infty$ stability estimate between $ \partial_x \widetilde{U}$ and $\bar{U}$. Namely, we want to show the following.
\begin{prop}
\label{prop_L_infty_bound}
 Let $\widetilde{U}_1$ and $\widetilde{U}_2$ be two strong solutions of 
  \begin{equation}
    \label{equ_U''}
 	 \pa_{xx}^2 \widetilde{U}_i (x) = e^{\bar{U}_i (x) +\widetilde{U}_i (x)} - 1, \qquad \mbox{for } i=1,2.
 \end{equation}
Then
 \begin{equation}
 \label{U'_1 - U'_2 hat}
 	\|  \pa_x \widetilde{U}_1  - \pa_x \widetilde{U}_2  \|_{L^{\infty} (\bt)} \leq 4 e^{12} \| \bar{U}_1  - \bar{U}_2  \|_{L^{\infty} (\bt)}.
 \end{equation}
\end{prop}
 \begin{proof}
The beginning of the proof is analogue to the $L^2$ estimate given by Lemma 3.9  \cite{Griffin_Iacobelli_torus} in dimension two and three.  We redo it in dimension one for completeness and then we extend the $L^2$ estimate to a $L^\infty$ estimate. We have by (\ref{equ_U''})
\begin{equation*}
	\pa_{xx}^2 \widetilde{U}_1 - \pa_{xx}^2 \widetilde{U}_2 =   e^{\bar{U}_1+\widetilde{U}_1} - e^{\bar{U}_2 +\widetilde{U}_2} = e^{\bar{U}_1} \( e^{\widetilde{U}_1} - e^{\widetilde{U}_2}  \) + e^{\widetilde{U}_2} \( e^{\bar{U}_1} - e^{\bar{U}_2} \).
\end{equation*}
We multiply both sides of the last equality by $(\widetilde{U}_1 - \widetilde{U}_2) $ and we integrate on $\bt$, 
\begin{align*}
	\int_\bt (\pa_{xx}^2 \widetilde{U}_1 - \pa_{xx}^2 \widetilde{U}_2) (\widetilde{U}_1 - \widetilde{U}_2)  \ dx & = \int_\bt   e^{\bar{U}_1} \( e^{\widetilde{U}_1} - e^{\widetilde{U}_2}  \) (\widetilde{U}_1 - \widetilde{U}_2)  \ dx + \int_\bt e^{\widetilde{U}_2} \( e^{\bar{U}_1} - e^{\bar{U}_2} \) (\widetilde{U}_1 - \widetilde{U}_2)  \ dx.
\end{align*}
Next, by integration by parts, we have
\begin{multline}
\label{U_1 - U_2}
	- \int_\bt ( \pa_x \widetilde{U}_1 - \pa_x \widetilde{U}_2)^2 \ dx
	= \int_\bt   e^{\bar{U}_1} \( e^{\widetilde{U}_1} - e^{\widetilde{U}_2}  \) (\widetilde{U}_1 - \widetilde{U}_2)  \ dx + \int_\bt e^{\widetilde{U}_2} \( e^{\bar{U}_1} - e^{\bar{U}_2} \) (\widetilde{U}_1 - \widetilde{U}_2)  \ dx  =: I_1 + I_2. 
\end{multline}
We observe that ($e^x-e^y)(x-y)$ is always non-negative. Furthermore by the Mean Value Theorem applied to the function $x \mapsto e^x$, we have a lower bound
\begin{equation*}
	(e^x-e^y)(x-y) \geq e^{\min \{x,y \}} (x-y)^2.
\end{equation*}
Moreover, we have $e^x \geq e^{-\vert x \vert}$ for every $x$. Therefore, we can bound $I_1$ from below as follow,
\begin{align}
\label{bound_I1}
	I_1 & \geq \int_\bt   e^{- \vert \bar{U}_1 \vert} e^{\min \{ \widetilde{U}_1(x) ,\widetilde{U}_2(x) \} }   (\widetilde{U}_1 - \widetilde{U}_2)^2  \ dx \geq \int_\bt   e^{- \| \bar{U}_1 \|_{L^{\infty}}}  e^{- \vert \min \{ \widetilde{U}_1(x) ,\widetilde{U}_2(x) \} \vert}     (\widetilde{U}_1 - \widetilde{U}_2)^2  \ dx \nonumber \\
	 & \geq \int_\bt   e^{- \| \bar{U}_1 \|_{L^{\infty}}}  e^{- \max_i \| \widetilde{U}_i \|_{L^{\infty}} }     (\widetilde{U}_1 - \widetilde{U}_2)^2  \ dx \geq e^{-4} \int_\bt (\widetilde{U}_1 - \widetilde{U}_2)^2   \ dx,
\end{align}
where we used $\| \bar{U} \|_{L^{\infty}(\bt)} \leq 1 $ and $ \| \widetilde{U} \|_{L^{\infty}(\bt)} \leq 3$. For $I_2$ we have again by the Mean Value Theorem,
\begin{equation}
\label{ineq_exp}
	\left| e^x-e^y \right| \leq e^{\max \{ x,y \}} \vert x-y \vert.
\end{equation}
Therefore,
\begin{equation}
\label{bound_I2}
	I_2 \leq e^{ \| \widetilde{U}_2 \|_{L^{\infty}} + \max_i \| \bar{U}_i \|_{L^{\infty}} } \int_\bt \vert \bar{U}_1 - \bar{U}_2 \vert \vert \widetilde{U}_1 - \widetilde{U}_2 \vert  \ dx \leq e^4 \int_\bt \vert \bar{U}_1 - \bar{U}_2 \vert \vert \widetilde{U}_1 - \widetilde{U}_2 \vert  \ dx.
\end{equation}
Therefore, using (\ref{bound_I1}) and (\ref{bound_I2}) into (\ref{U_1 - U_2}), we get
\begin{align*}
	 \int_\bt ( \pa_x \widetilde{U}_1 - \pa_x \widetilde{U}_2)^2   \ dx & \leq - e^{-4} \int_\bt (\widetilde{U}_1 - \widetilde{U}_2)^2   \ dx + e^4 \int_\bt \vert \bar{U}_1 - \bar{U}_2 \vert \vert \widetilde{U}_1 - \widetilde{U}_2 \vert  \ dx \nonumber \\
	 & \leq - e^{-4} \int_\bt (\widetilde{U}_1 - \widetilde{U}_2)^2   \ dx + e^4 \int_\bt \be \vert \bar{U}_1 - \bar{U}_2 \vert^2  +  \frac{1}{\be} \vert \widetilde{U}_1 - \widetilde{U}_2 \vert ^2  \ dx,
\end{align*}
where we have used $\vert ab \vert \leq \be a^2 + \frac{1}{\be} b^2$ for any $\be >0$.  Then if we choose $\beta = 2e^8$ we obtain
\begin{align*}
	\frac{e^{-4}}{2} \int_\bt (\widetilde{U}_1 - \widetilde{U}_2)^2   \ dx + \int_\bt ( \pa_x \widetilde{U}_1 - \pa_x \widetilde{U}_2)^2   \ dx & \leq  2 e^{12} \int_\bt  \vert \bar{U}_1 - \bar{U}_2 \vert^2    \ dx,
\end{align*}
because $e^{-4} - \frac{e^4}{\beta} = \frac{e^{-4}}{2}$. With this last inequality, we have obtained the two following estimates
\begin{equation}
\label{L2_U}
	\|   \widetilde{U}_1  -  \widetilde{U}_2  \|_{L^{2} (\bt)} \leq 2 e^{8} \|   \bar{U}_1  -  \bar{U}_2  \|_{L^{2} (\bt)},
\end{equation}
and
\begin{equation}
\label{L2_U'}
	\|   \pa_x \widetilde{U}_1  - \pa_x \widetilde{U}_2  \|_{L^{2} (\bt)} \leq \sqrt{2} e^6 \|   \bar{U}_1  -  \bar{U}_2  \|_{L^{2} (\bt)}.
\end{equation}
Moreover, if we consider (\ref{equ_U''}), we have by (\ref{ineq_exp})
\begin{align*}
	\| \pa_{xx}^2 \widetilde{U}_1 - \pa_{xx}^2 \widetilde{U}_2  \|_{L^{2} (\bt)} & =   \Vert e^{\bar{U}_1+\widetilde{U}_1} - e^{\bar{U}_2 +\widetilde{U}_2}  \Vert_{L^{2} (\bt)}  \leq e^4 \Vert ( \bar{U}_1+\widetilde{U}_1) - (\bar{U}_2  + \widetilde{U}_2)  \Vert_{L^{2} (\bt)} ,
\end{align*}
where we used $\| \bar{U} \|_{L^{\infty}(\bt)} + \| \widetilde{U} \|_{L^{\infty}(\bt)} \leq 4$. Therefore, using (\ref{L2_U}), we find
\begin{align}
\label{L2_U''}
	\| \pa_{xx}^2 \widetilde{U}_1 - \pa_{xx}^2 \widetilde{U}_2  \|_{L^{2} (\bt)} & \leq e^4 \Vert  \bar{U}_1 - \bar{U}_2   \Vert_{L^{2} (\bt)}  + e^4 \Vert  \widetilde{U}_1 - \widetilde{U}_2   \Vert_{L^{2} (\bt)} \nonumber \\
	& \leq e^4 \Vert  \bar{U}_1 - \bar{U}_2   \Vert_{L^{2} (\bt)} + 2 e^{12}  \Vert  \bar{U}_1 - \bar{U}_2   \Vert_{L^{2} (\bt)} \nonumber \\
	& \leq 3 e^{12}  \Vert  \bar{U}_1 - \bar{U}_2   \Vert_{L^{2} (\bt)}.
\end{align}
Finally, with the $L^\infty$ Sobolev embedding, we get
\begin{align*}
	\|  \pa_x \widetilde{U}_1  - \pa_x \widetilde{U}_2  \|_{L^{\infty} (\bt)} & \leq \|  \pa_x \widetilde{U}_1  - \pa_x \widetilde{U}_2  \|_{W^{1,2} (\bt)} \leq \|   \pa_x \widetilde{U}_1  - \pa_x \widetilde{U}_2  \|_{L^{2} (\bt)} + \| \pa_{xx}^2 \widetilde{U}_1 - \pa_{xx}^2 \widetilde{U}_2  \|_{L^{2} (\bt)} \\
	& \leq \sqrt{2}e^6 \|   \bar{U}_1  -  \bar{U}_2  \|_{L^{2} (\bt)} + 3 e^{12}  \Vert  \bar{U}_1 - \bar{U}_2   \Vert_{L^{2} (\bt)} \leq 4 e^{12}  \Vert  \bar{U}_1 - \bar{U}_2   \Vert_{L^{2} (\bt)}\\
	& \leq 4 e^{12}  \Vert  \bar{U}_1 - \bar{U}_2   \Vert_{L^{\infty} (\bt)},
\end{align*}
where we used (\ref{L2_U'}),  (\ref{L2_U''}),and $L^\infty \subseteq L^2$ for bounded domains.
\end{proof}

\section{Proof of Theorem \ref{thm1} }
\label{section_proof}

The idea to show that the Landau damping occurs for the system (\ref{VPME}) with the condition (\ref{asymptotic_cond}) is an iterative scheme. More precisely, let $f^*$ be the asymptotic datum of Theorem \ref{thm1} and consider the sequence of linear problems
\begin{equation}
\label{VPME_lin_seq}
	 \partial_t f_{n+1} + v \cdot \pa_x  f_{n+1}  +( \bar{E}_n + \widetilde{E}_n ) \cdot \pa_v f_{n+1} = 0,
\end{equation}
with the asymptotic condition 
\begin{equation}
\label{asymptotic_cond_seq}
	\| f_{n+1}(t,x,v) - f^*(x-vt, v) \|_{L^{\infty} (\bt \times \br)} \longrightarrow 0 \qquad \mbox{as } t\rightarrow + \infty.
\end{equation}
For $n=0$ we define $ \bar{E}_0 = \widetilde{E}_0 = 0$ and for $n \geq 1$, 
\begin{eqnarray}
\label{sequence_electric_field}
 \left\{
    \begin{array}{l}
	\bar{E}_n = -\pa_x \bar{U}_n, \qquad \pa_{xx}^2 \bar{U}_n =1 - \rho_n,  \\
	\widetilde{E}_n = -\pa_x \widetilde{U}_n, \qquad \pa_{xx}^2 \widetilde{U}_n = e^{\bar{U}_n+ \widetilde{U}_n} -1, \\
	\rho_n = \int_\br f_n \ dv .
	\end{array}
	\right.
\end{eqnarray}
The iterative scheme reads as follows
\begin{eqnarray*}
    \left \{
    \begin{array}{l}
   0= \bar{E}_0\\
   0=  \widetilde{E}_0
    \end{array}
    \right \} \xrightarrow[(\ref{VPME_lin_seq})]{\mbox{solve}}  f_1 \longrightarrow \rho_1 \xrightarrow[(\ref{sequence_electric_field})]{\mbox{solve}} 
     \left \{
     \begin{array}{l}
   \bar{E}_1\\
    \widetilde{E}_1
    \end{array}
      \right \} \xrightarrow[(\ref{VPME_lin_seq})]{\mbox{solve}} f_2 \longrightarrow \rho_2 \xrightarrow[(\ref{sequence_electric_field})]{\mbox{solve}}
     \left \{
     \begin{array}{l}
   \bar{E}_2\\
    \widetilde{E}_2
    \end{array}
      \right \} \longrightarrow ...
\end{eqnarray*}
We show that the problem (\ref{VPME_lin_seq}) - (\ref{sequence_electric_field}) can be solved for all $n$ and that the pair of solutions $(f_n,  E_n)$ converges as $n$ goes to infinity to a solution of the system (\ref{VPME}) with the asymptotic condition (\ref{asymptotic_cond}).  The key element is to show that the map $E_n \mapsto E_{n+1}$ is a contraction with the norm given in Definition \ref{def_norm}.
\begin{prop}
	In the setting defined above and with the assumptions of Theorem \ref{thm1} we have 
	\begin{align}
	\label{bdd E_n}
		\|  E_n  \|_{a,t_0} & \leq 16 a_1 , \\
		\label{E_n Lipschitz}
		\vert E_n (t,x) - E_n (t, x') \vert & \leq (24 a_2+3) \vert x-x' \vert, \\
		\label{bdd E_n - E_n-1}
		\|  E_{n+1} - E_n  \|_{a,t_0} & \leq \frac{1}{2^n} 16 a_1 ,
	\end{align}
where $E_n = \bar{E}_n + \widetilde{E}_n$.
\end{prop}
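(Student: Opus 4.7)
The three estimates will be established simultaneously by induction on $n$; the base case $n=0$ is immediate since $E_0\equiv 0$. The engine of the inductive step is a Lagrangian--Fourier representation of the density. Solving the linear transport equation (\ref{VPME_lin_seq}) along its characteristics driven by $E_{n-1}=\bar E_{n-1}+\widetilde E_{n-1}$, with the asymptotic condition (\ref{asymptotic_cond_seq}), yields $f_n=f^*\circ\phi_{n,t}^{-1}$. Liouville invariance and the integral form of the characteristics then give
\begin{equation*}
\widehat{\rho_n}(t,k)=\int_{\bt\times\br}e^{-2\pi i k X_n(t,x,v)}\,f^*(x,v)\,dx\,dv,\qquad X_n(t,x,v)=x+vt+\psi_n(t,x,v),
\end{equation*}
with $\psi_n(t,x,v)=\int_t^{+\infty}(\tau-t)\,E_{n-1}(\tau,X_n(\tau,x,v))\,d\tau$. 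The inductive bound (\ref{bdd E_n}) on $E_{n-1}$ renders $\psi_n$ uniformly small, roughly of order $a_1/a^2$.

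\textbf{Bounds on $E_n$ and its Lipschitz constant.} For (\ref{bdd E_n}) I use $\widehat{\bar E_n}(t,k)=\widehat{\rho_n}(t,k)/(2\pi i k)$ and factor $e^{-2\pi i k X_n}=e^{-2\pi i k x}e^{-2\pi i k v t}e^{-2\pi i k\psi_n}$. Shifting the $v$--contour by $\mp i a\,\operatorname{sign}(k)$---legitimate because the decay $e^{-a|\eta|}$ of $\widehat{f^*}$ makes $f^*(x,\cdot)$ analytic in the strip $|\mathrm{Im}\,v|<a$, and $v\mapsto X_n(\tau,x,v)$ extends analytically through the smooth ODE---turns $e^{-2\pi i k v t}$ into $e^{-2\pi|k|at}\leq e^{-at}$ and produces
\begin{equation*}
|\widehat{\bar E_n}(t,k)|\leq \frac{e^{-6}}{2\pi|k|\,(1+|k|^\alpha)}\,e^{-at}.
\end{equation*}
Summing over $k\neq 0$ with $\sum_{k\geq 1}|k|^{-(1+\alpha)}\leq a_1$ bounds $\|\bar E_n\|_{a,t_0}$; Lemma~\ref{lem_L_infty_bound}, applied after absorbing a constant into $\widetilde U_n$ so that the reference pair is $(\bar U,\widetilde U)=(0,0)$, yields $\|\widetilde E_n(t)\|_\infty\leq e^6\|\bar E_n(t)\|_\infty$, and the combination, together with the calibration $a^2\geq(200a_2+3)(e^6+1)$, gives (\ref{bdd E_n}) with the stated constant $16 a_1$. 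The Lipschitz bound (\ref{E_n Lipschitz}) is direct: (\ref{lipschitz_W'}) together with the pointwise bound $\|\rho_n(t,\cdot)\|_\infty\leq 24 a_2$---obtained by pushing the decay $f^*(x,v)\leq a_2/(1+v^4)$ through $\phi_{n,t}^{-1}$, whose velocity component differs from the identity by $\int_t^\infty E_{n-1}$---handles $\bar E_n$, and $\mathrm{Lip}(\widetilde E_n)\leq\|\pa_{xx}\widetilde U_n\|_\infty\leq 3$ (the a priori estimate recalled in Section~\ref{section_E}) handles $\widetilde E_n$.

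\textbf{Contraction.} For (\ref{bdd E_n - E_n-1}) I subtract the Fourier formulas for $\widehat{\rho_n}$ and $\widehat{\rho_{n-1}}$, apply $|e^{ia}-e^{ib}|\leq|a-b|$, and reduce to bounding $|\psi_n-\psi_{n-1}|$. Splitting
\begin{equation*}
|E_{n-1}(\tau,X_n)-E_{n-2}(\tau,X_{n-1})|\leq e^{-a\tau}\|E_{n-1}-E_{n-2}\|_{a,t_0}+ \mathrm{Lip}(E_{n-1})\,|X_n-X_{n-1}|(\tau),
\end{equation*}
and closing a Grönwall-type inequality for $|X_n-X_{n-1}|$ in the weighted norm, one obtains $|\psi_n-\psi_{n-1}|\lesssim a^{-2}\|E_{n-1}-E_{n-2}\|_{a,t_0}\,e^{-at}$. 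Another $v$-contour shift produces the $e^{-at}$ factor in the full Fourier integrand; summation in $k$ delivers the contraction for $\bar E_n-\bar E_{n-1}$, and Lemma~\ref{lem_L_infty_bound} applied to the pair $(\bar U_n,\bar U_{n-1})$ (both associated with unit-mass densities, so their difference is zero-mean and $\|\bar U_n-\bar U_{n-1}\|_\infty\leq\|\bar E_n-\bar E_{n-1}\|_\infty$) transfers the contraction to $\widetilde E_n-\widetilde E_{n-1}$. The hypothesis $a^2\geq(200 a_2+3)(e^6+1)$ is precisely what forces the resulting constant to be $\tfrac12$.

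\textbf{Main obstacle.} The delicate step is the contraction: because $X_n$ and $X_{n-1}$ depend on the entire future histories of $E_{n-1}$ and $E_{n-2}$, the estimate for $|\psi_n-\psi_{n-1}|$ feeds back through $|X_n-X_{n-1}|$ and must be closed by a Grönwall argument in the weighted norm $\|\cdot\|_{a,t_0}$; simultaneously one must justify the complex shift in $v$ in the presence of two competing perturbations $\psi_n,\psi_{n-1}$ and keep careful track of the numerical constants so that the final contraction factor is exactly $1/2$, which is what the quantitative hypothesis $a^2\geq(200 a_2+3)(e^6+1)$ is tailored for.
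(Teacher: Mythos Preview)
Your overall inductive architecture and the use of Lemma~\ref{lem_L_infty_bound} to pass from $\bar E$ to $\widetilde E$ match the paper. The substantive divergence---and the gap---is the mechanism you propose for extracting the exponential decay $e^{-at}$.

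\medskip
\textbf{The contour shift is not justified.} You write that ``$v\mapsto X_n(\tau,x,v)$ extends analytically through the smooth ODE''. But the ODE is $\ddot X_n=E_{n-1}(t,X_n)$, and the only regularity of $E_{n-1}$ available from the inductive hypotheses is the Lipschitz bound (\ref{E_n Lipschitz}); analyticity of $E_{n-1}$ in $x$ is never established. Without it the characteristics are not analytic in $v$, the shift $v\mapsto v\mp ia\,\mathrm{sign}(k)$ cannot be performed inside $E_{n-1}(\tau,X_n(\tau,x,v))$, and the derivation of $|\widehat{\bar E_n}(t,k)|\lesssim e^{-at}/(|k|(1+|k|^\alpha))$ collapses for $n\ge 2$. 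A second symptom of the same problem appears in your contraction step: after $|e^{ia}-e^{ib}|\le |a-b|$ you are left with an integral containing no oscillation in $v$, so a further contour shift cannot generate additional decay; and the crude bound $|\widehat{\rho_n}-\widehat{\rho_{n-1}}|\lesssim |k|\,e^{-at}$ is not summable over $k\ne 0$.

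\medskip
\textbf{How the paper proceeds instead.} The paper uses Fourier analysis \emph{only} in the base case $n=1$, where $E_0=0$ gives the exact identity $\widehat{\rho_1}(t,k)=\widehat{f^*}(k,kt)$ and (\ref{Fourier transform f*}) yields $\|E_1\|_{a,t_0}\le 8a_1$ directly---no contour shift is needed. For $n\ge 1$ the exponential decay of $E_{n+1}$ is \emph{not} re-derived; it is obtained a posteriori by telescoping the contraction $\|E_{n+1}-E_n\|_{a,t_0}\le\tfrac12\|E_n-E_{n-1}\|_{a,t_0}$ back to $\|E_1\|_{a,t_0}$. The contraction itself is proved in physical space, following Caglioti--Maffei: one writes $\bar E_{n+1}(t,x)=-\int W'(x-X_{n+1}(t,x',v'))\,f^*(x',v')\,dx'\,dv'$ via the measure-preserving change of variables, subtracts the analogous formula for $\bar E_n$, and uses (\ref{lipschitz_W'}) together with the weighted ODE estimate $\|X_{n+1}-X_n\|_{a,t_0}\le (a^2-(24a_2+3))^{-1}\|E_n-E_{n-1}\|_{a,t_0}$. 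This requires only Lipschitz regularity of $W'$ and of $E_{n-1}$, which is exactly what the inductive hypotheses provide. The same convolution argument with $W$ in place of $W'$ bounds $\|\bar U_{n+1}-\bar U_n\|_{a,t_0}$, and then Lemma~\ref{lem_L_infty_bound} handles $\widetilde E_{n+1}-\widetilde E_n$.

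\medskip
In short: drop the contour deformation entirely; use Fourier only at $n=1$, physical-space convolution for the contraction, and recover (\ref{bdd E_n}) by telescoping.
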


In the next two subsections we prove this proposition by induction.

\subsection{Base case \texorpdfstring{$n=1$}{Lg}}
For $n=0$,  $ \bar{E}_0 = \widetilde{E}_0 = 0$ and $f_1$ solves the free transport equation, i.e. 
\begin{equation}
	 \partial_t f_{1} + v \cdot \pa_x  f_{1}  = 0.
\end{equation}
Therefore, $f_1 (t,x,v) = f^*(x-vt, v)$ and $\rho_1(t,x) = \int_\br f^*(x-vt, v) \ dv$. Next, we show $\|  E_1  \|_{a,t_0}  \leq 8 a_1$.  First, we bound $ \bar{E}_1$.  Using  equations (\ref{sequence_electric_field}), we have $ \pa_{x} \bar{E}_1 = \rho_1 - 1$. We can transform this on the Fourier side and we get the following relation
\begin{eqnarray*}
 \left\{
    \begin{array}{l l}
	k \widehat{\bar{E}_1}(t,k) = \widehat{\rho_1}(t,k) = \widehat{f^*}(k,kt) & \quad \mbox{for } k \ne 0, \\
	\widehat{\bar{E}_1}(t,k) = 0 & \quad \mbox{for } k = 0. 
	\end{array}
	\right.
\end{eqnarray*}
Therefore,
\begin{align*}
	\|  \bar{E}_1  \|_{L^{\infty} (\bt)} & \leq  \sum_{k \in \mathbb{Z}} \vert \widehat{\bar{E}_1}(k) \vert \leq  \sum_{k \in \mathbb{Z} \setminus \{ 0 \}} \frac{1}{\vert k \vert} \vert  \widehat{f^*}(k,kt) \vert  \leq   \sum_{k \in \mathbb{Z} \setminus \{ 0 \}} \frac{e^{-12}}{4} \frac{1}{\vert k \vert} \frac{1}{1 +\vert k \vert^\alpha} e^{-a\vert kt \vert} \\
	& \leq e^{-at}  \sum_{k \in \mathbb{Z} \setminus \{ 0 \}}  \frac{1}{\vert k \vert^{1+\alpha}}   \leq 4 a_1 e^{-at},
\end{align*}
where we used (\ref{Fourier transform f*}) and $\sum_{k=1}^\infty  \frac{1}{\vert k \vert^{1+\alpha}} \leq a_1$. Therefore, we get $\|  \bar{E}_1  \|_{a,t_0} \leq 4a_1$. 
For $\|  \widetilde{E}_1  \|_{a,t_0}$ we use the bound (\ref{U'_1 - U'_2 hat}) obtained in Proposition \ref{prop_L_infty_bound} to get
\begin{align*}
	\|  \widetilde{E}_1  \|_{L^{\infty} (\bt)} =\|  \pa_x \widetilde{U}_1  \|_{L^{\infty} (\bt)} \leq 4e^{12} \| \bar{U}_1  \|_{L^{\infty} (\bt)}.
\end{align*}
Then, using equation $\pa_{xx}^2 \bar{U}_1 =1 - \rho_1$, we have a relation between the Fourier transform of $\bar{U}_1$ and $f^*$ as before
\begin{eqnarray*}
 \left\{
    \begin{array}{l l}
	k^2 \widehat{\bar{U}_1}(t,k) = \widehat{\rho_1}(t,k) = \widehat{f^*}(k,kt) & \quad \mbox{for } k \ne 0, \\
	\widehat{\bar{U}_1}(t,k) = 0 & \quad \mbox{for } k = 0. \nonumber
	\end{array}
	\right.
\end{eqnarray*}
Therefore,
\begin{align*}
	\|  \widetilde{E}_1  \|_{L^{\infty} (\bt)} & \leq 4e^{12} \| \bar{U}_1  \|_{L^{\infty} (\bt)} \leq 4 e^{12} \sum_{k \in \mathbb{Z}} \vert \widehat{\bar{U}_1}(k) \vert \leq 4 e^{12} \sum_{k \in \mathbb{Z} \setminus \{ 0 \}} \frac{1}{\vert k \vert^2} \vert  \widehat{f^*}(k,kt) \vert \\
	& \leq 4 e^{12}  \sum_{k \in \mathbb{Z} \setminus \{ 0 \}} \frac{e^{-12}}{4} \frac{1}{\vert k \vert^2} \frac{1}{1 + \vert k \vert^\alpha} e^{-a\vert kt \vert} \leq e^{-at}  \sum_{k \in \mathbb{Z} \setminus \{ 0 \}}  \frac{1}{\vert k \vert^{1+\alpha}}   \leq 4 a_1 e^{-at},
\end{align*}
where we used again (\ref{Fourier transform f*}) and  $\sum_{k=1}^\infty  \frac{1}{\vert k \vert^{1+\alpha}} \leq a_1$.
Finally, we get $\|  \widetilde{E}_1  \|_{a,t_0} \leq 4a_1$. This shows that
\begin{equation}
	\label{bdd E_1}
	\|  E_1  \|_{a,t_0} \leq  \|  \bar{E}_1  \|_{a,t_0} + \|  \widetilde{E}_1  \|_{a,t_0}  \leq 8 a_1.
\end{equation}
We still need to prove that $E_1$ is Lipschitz. First, we show that $\rho_1 \in L^1 (\bt) \cap L^{\infty} (\bt)$. Indeed, with (\ref{bdd f*})
\begin{align*}
	\| \rho_1 (t)  \|_{L^{\infty} (\bt)}  = \sup_{x \in \bt} \int_\br f^*(x-vt, v) \ dv \leq  \sup_{x \in \bt} \int_\br \frac{a_2}{1+v^4} \ dv \leq 4a_2,
\end{align*}
and similarly,
\begin{align*}
	\| \rho_1 (t)  \|_{L^{1} (\bt)}  \leq 4a_2.
\end{align*}
Let $[x,x']$ be an interval that does not contain $k$ for any $k \in \bz$. Then for $y \in \bt $ and $y \notin [x,x']$ we have $[ (x-y), (x'-y) ]$ does not contain any $k \in \bz$. Therefore,  
\begin{multline*}
	\vert \bar{E}_1(t,x) - \bar{E}_1 (t,x') \vert \leq  \int_\bt  \vert W'(x-y) - W'(x'-y) \vert \rho_1(t,y) \ dy \\
	 \leq \int_{\{ y \in [x,x'] \}} \vert W'(x-y) - W'(x'-y) \vert \rho_1 (t,y) \ dy \\
	 + \int_{\{ y \notin [x,x'] \}} \vert W'(x-y) - W'(x'-y) \vert \rho_1 (t,y) \ dy.
	\end{multline*}
For the first integral we use (\ref{bound_W'}) and for the second integral we use (\ref{lipschitz_W'}). Thus,	
\begin{align}
\label{lipschitz E_1}
	\vert \bar{E}_1(t,x) - \bar{E}_1 (t,x') \vert &  \leq  \| \rho_1 (t) \|_{L^{\infty} (\bt)}  \int_{\{ y \in [x,x'] \}} 2 \vert W'(y) \vert \ dy + \vert x-x' \vert \int_{\{ y \notin [x,x'] \}} \rho_1 (t,y) \ dy  \nonumber \\
	& \leq 2 \| \rho_1 (t) \|_{L^{\infty} (\bt)}	 \int_x^{x'} dy + \vert x-x' \vert  \int_\bt \rho_1 (t,y) \ dy \nonumber \\
	& \leq 2  \| \rho_1 (t) \|_{L^{\infty} (\bt)} \vert x-x' \vert  + \vert x-x' \vert  \| \rho_1 (t) \|_{L^1 (\bt)} \leq 12a_2 \vert x-x' \vert.
\end{align}
For $\widetilde{E}_1$, the bound $\| \pa_x \widetilde{E}_1 \|_{L^{\infty} (\bt)} \leq 3$, given in Lemma \ref{lemma widetilde U} implies that $\widetilde{E}_1$ is 3-Lipschitz. Hence,
\begin{equation}
\label{Lip_E1}
	\vert E_1 (t,x) - E_1 (t,x') \vert  \leq (12 a_2 + 3) \vert x-x' \vert.
\end{equation}
In this section, we have shown that (\ref{bdd E_n}), (\ref{E_n Lipschitz}) and (\ref{bdd E_n - E_n-1}) hold for $n=0$ and $n=1$.

\subsection{Induction step \texorpdfstring{$n \Rightarrow n+1$}{Lg}}

Here, we prove that $E_{n+1}$ satisfies  (\ref{bdd E_n}), (\ref{E_n Lipschitz}) and (\ref{bdd E_n - E_n-1}) under the assumption that $E_n$ does.

Given $E_n$, we can solve (\ref{VPME_lin_seq}) using the characteristic curves:
\begin{equation*}
	  \left \{
    \begin{array}{l}
    \dot{X}_{n+1} (t) = V_{n+1}(t),\\
    \dot{V}_{n+1}(t) = \bar{E}_n(t) + \widetilde{E}_n(t) = E_n(t),\\
    \lim_{t \to \infty} X_{n+1}(t) - V_{n+1}(t)t = x,\\
    \lim_{t \to \infty} V_{n+1}(t) = v.
    \end{array}
    \right.
\end{equation*}
Since $E_n$ is Lipschitz and bounded by assumption,  we have the existence and uniqueness of a well defined flow $\phi_t (x,v) = (X_{n+1}(t,x,v), V_{n+1}(t,x,v))$.  By step 2 in the proof of \cite[Lemma 3.1]{EC_CM}, we have that the flow and its inverse are Hölder continuous.
We denote the inverse of the flow $\phi_t^{-1}$.  Since it is Hölder continuous we can define $f_{n+1}$ as 
\begin{equation*}
	f_{n+1}(t,x,v) = f^* (\phi_t^{-1} (x,v)).
\end{equation*}
Moreover, the asymptotic condition (\ref{asymptotic_cond_seq}) is satisfied. Indeed, we have using the bijectivity of $\phi$ and the asymptotic limit of $X_{n+1}$ and $V_{n+1}$, 
\begin{align}
\label{computation asymptotic}
	\left\Vert f_{n+1}(t,x,v) - f^*(x-vt,v) \right\Vert_{L^{\infty} (\bt \times \br)} & = \left\Vert f_{n+1}(t,x + vt,v) - f^*(x,v) \right\Vert_{L^{\infty} (\bt \times \br)} \nonumber \\
	& = \left\Vert f^* (\phi_t^{-1} (x + vt,v)) - f^*(x,v) \right\Vert_{L^{\infty} (\bt \times \br)} \nonumber \\
	& =  \left\Vert f^* (x + vt,v) - f^*(\phi_t (x,v)) \right\Vert_{L^{\infty} (\bt \times \br)} \nonumber \\
	& =  \left\Vert f^* (x + vt,v) - f^*(X_{n+1}(t,x,v), V_{n+1}(t,x,v)) \right\Vert_{L^{\infty} (\bt \times \br)} \xrightarrow[t \rightarrow \infty]{} 0.
\end{align}
The next lemma shows that $\rho_{n+1} \in L^{1} (\bt) \cap L^{\infty} (\bt)$.
\begin{lem}
\label{lemma bound_rho_n+1}
We have the two following bounds on the density $\rho_{n+1}$. 
\begin{equation}
\label{bound_rho_n+1}
	\| \rho_{n+1} \|_{L^{1} (\bt)} \leq 4 a_2 \qquad \mbox{and } \qquad \| \rho_{n+1} \|_{L^{\infty} (\bt)} \leq 10 a_2.
\end{equation}
\end{lem}
\begin{proof}
Since $(V_{n+1}, E_n)$ is divergence free, we have that the flow $\phi_t$ is measure preserving. Therefore, by the change of variable formula, we obtain
\begin{align*}
	\| \rho_{n+1} \|_{L^{1} (\bt)} = \int_\bt \int_\br f_{n+1}(t,x,v) \ dv  dx = \int_\bt \int_\br f^* (\phi_t^{-1} (x,v)) \ dv  dx =  \int_\bt \int_\br f^* (x,v) \ dv  dx,
\end{align*}
thus, with (\ref{bdd f*}), we get
\begin{equation*}
	\| \rho_{n+1} \|_{L^{1} (\bt)} \leq \int_\bt \int_\br \frac{a_2}{1+v^4} \ dv  dx \leq 4 a_2.
\end{equation*}
Similarly to step 4 in \cite[Lemma 3.1]{EC_CM}, we obtain $\| \rho_{n+1} \|_{L^{\infty} (\bt)} \leq 10 a_2$.  
\end{proof}
In the next lemma we show the Lipschitz property of the electric field $E_{n+1}$, i.e. inequality (\ref{E_n Lipschitz}).
\begin{lem}
\label{lemma Lipschitz E_n+1}
Let $E_{n+1} = \bar{E}_{n+1} + \widetilde{E}_{n+1}$. Then we have the following result
\begin{equation}
\label{Lipschitz E_n+1}
	\vert E_{n+1} (t,x) - E_{n+1} (t,x') \vert  \leq (24 a_2+3) \vert x-x' \vert.
\end{equation}
\end{lem}
\begin{proof}
First, we can write $\bar{E}_{n+1}$ as a convolution with $W'$ and $\rho_{n+1}$.   Then, we proceed in the same way as for $\bar{E}_1$ and we obtain as in (\ref{lipschitz E_1})
\begin{align*}
	\vert \bar{E}_{n+1}(t,x) - \bar{E}_{n+1} (t,x') \vert & \leq 2  \| \rho_{n+1} (t) \|_{L^{\infty} (\bt)} \vert x-x' \vert  + \vert x-x' \vert  \| \rho_{n+1} (t) \|_{L^1 (\bt)} \leq 24a_2 \vert x-x' \vert,
\end{align*}
where we used the two bounds given by (\ref{bound_rho_n+1}). For $\widetilde{E}_{n+1}$, we use  the bound $\| \pa_x \widetilde{E}_{n+1} \|_{L^{\infty} (\bt)} = \| \pa_{xx}^2 \widetilde{U}_{n+1} \|_{L^{\infty} (\bt)} \leq 3$, given in Lemma \ref{lemma widetilde U} to have 
\begin{equation*}
	\vert \widetilde{E}_{n+1} (t,x) - \widetilde{E}_{n+1} (t,x') \vert  \leq 3 \vert x-x' \vert.
\end{equation*}
Hence, we proved that $E_{n+1}$ is Lipschitz,
\begin{equation*}
	\vert E_{n+1} (t,x) - E_{n+1} (t,x') \vert  \leq (24 a_2+3) \vert x-x' \vert.
\end{equation*}
\end{proof}
The following part is devoted to showing that 
\begin{equation*}
	\|  E_{n+1} - E_n  \|_{a,t_0}  \leq \frac{1}{2} \|  E_{n} - E_{n-1}  \|_{a,t_0}.
\end{equation*}

\begin{lem}
\label{lemma X_n+1 - X_n}
Let $X_{n+1}(t)$ and $X_n(t)$ be two solutions of the characteristic curves (\ref{cc_VPME}) with vector fields $E_n$ and $E_{n-1}$ respectively and with the same asymptotic conditions.   That is
\begin{equation*}
	  \left \{
    \begin{array}{l}
    \dot{X}_{n+1} (t) = V_{n+1}(t),\\
    \dot{V}_{n+1}(t) = \bar{E}_n(t) + \widetilde{E}_n(t) = E_n(t),\\
    \lim_{t \to \infty} X_{n+1}(t) - V_{n+1}(t)t = x,\\
    \lim_{t \to \infty} V_{n+1}(t) = v,
    \end{array}
    \right.
    \qquad \mbox{and} \qquad
      \left \{
    \begin{array}{l}
    \dot{X}_n (t) = V_n(t),\\
    \dot{V}_n(t) = \bar{E}_{n-1}(t) + \widetilde{E}_{n-1}(t) = E_{n-1}(t),\\
    \lim_{t \to \infty} X_n(t) - V_n(t)t = x,\\
    \lim_{t \to \infty} V_n(t) = v.
    \end{array}
    \right.
\end{equation*}
Then 
\begin{equation}
\label{X_n+1 - X_n}
	 \|  X_{n+1} - X_n  \|_{a,t_0}   \leq \frac{1}{a^2- (24a_2 + 3)} \|  E_n - E_{n-1}  \|_{a,t_0}.
\end{equation}
\end{lem}

\begin{proof}
This proof follows step 6 in  \cite[Lemma 3.1]{EC_CM}. The integral solutions of $X_{n+1}(t)$ and $X_n(t)$ are given by 
\begin{equation*}
	X_{n+1}(t,x,v) = x+vt + \int_t^\infty (s-t) E_n (s, X_{n+1}(s,x,v)) \ ds
\end{equation*}
and 
\begin{equation*}
	X_{n}(t,x,v) = x+vt + \int_t^\infty (s-t) E_{n-1} (s, X_{n}(s,x,v)) \ ds.
\end{equation*}
First, we can show that $\left| X_{n+1}(t,x,v) - X_{n}(t,x,v) \right|$ is bounded. Indeed,
\begin{align}
\label{X_n bdd}
	\left| X_{n+1}(t,x,v) - X_{n}(t,x,v) \right| & \leq  \int_t^\infty (s-t) \vert E_n (s, X_{n+1}(s,x,v))-E_{n-1} (s, X_{n}(s,x,v)) \vert \ ds \nonumber \\
	& \leq  \int_t^\infty (s-t) e^{-as} ( \|  E_n  \|_{a,t_0}  + \|  E_{n-1}  \|_{a,t_0} )  \ ds  \nonumber \\
	& = \frac{e^{-at}}{a^2} ( \|  E_n  \|_{a,t_0}  + \|  E_{n-1}  \|_{a,t_0} ),
\end{align}
which is bounded because by assumption $E_n$ and $E_{n-1}$ are bounded by (\ref{bdd E_n}).
Next we have, 
\begin{align*}
	\left| X_{n+1}(t,x,v) - X_{n}(t,x,v) \right| 	& \leq  \int_t^\infty (s-t) \vert E_n (s, X_{n+1}(s,x,v))-E_{n-1} (s, X_{n+1}(s,x,v)) \vert \ ds \\
	& \quad +  \int_t^\infty (s-t) \vert E_{n-1} (s, X_{n+1}(s,x,v))-E_{n-1} (s, X_{n}(s,x,v)) \vert \ ds  \\
	& \leq  \int_t^\infty (s-t) e^{-as}  \|  E_n  - E_{n-1}   \|_{a,t_0}  \ ds \\
	& \quad +  \int_t^\infty (s-t) (24 a_2 + 3)  \vert X_{n+1}(s,x,v)- X_{n}(s,x,v) \vert \ ds  \\
	& \leq  \frac{e^{-at}}{a^2}  \|  E_n  - E_{n-1}   \|_{a,t_0}  + (24 a_2 + 3) \int_t^\infty (s-t)  \vert X_{n+1}(s,x,v)- X_{n}(s,x,v) \vert \ ds,
\end{align*}
where we used the Lipschitz property of the field (\ref{E_n Lipschitz}) and the definition \ref{def_norm} for the norm. We can bootstrap the last inequality as follow,
\begin{align*}
	& \left| X_{n+1}(t,x,v)   -  X_{n}(t,x,v) \right| 	 \leq  \frac{e^{-at}}{a^2}  \|  E_n  - E_{n-1}   \|_{a,t_0} + (24 a_2 + 3) \int_t^\infty (s-t)  \left(  \frac{e^{-as}}{a^2}  \|  E_n  - E_{n-1}   \|_{a,t_0} \right) \ ds  \\
	&   + (24 a_2 + 3)^2 \int_t^\infty (s-t) \int_s^\infty (s_1-s)   \vert X_{n+1}(s_1,x,v)- X_{n}(s_1,x,v) \vert \ ds_1   \ ds \\
	& \leq  \frac{e^{-at}}{a^2}  \|  E_n  - E_{n-1}   \|_{a,t_0}  \sum_{k=0}^K \frac{(24 a_2 + 3)^k}{a^{2k}} \\
	&  + (24 a_2 + 3)^{K+1} \int_t^\infty (s-t) \int_s^\infty (s_1-s) \  ...  \int_{s_{K-1}}^\infty (s_{K}-s_{K-1})   \vert X_{n+1}(s_{K},x,v)- X_{n}(s_{K},x,v) \vert \ ds_{K} ...  \ ds_1  \ ds \\ 
	& \leq \frac{e^{-at}}{a^2}  \|  E_n  - E_{n-1}   \|_{a,t_0}  \sum_{k=0}^K \frac{(24 a_2 + 3)^k}{a^{2k}}  \quad + \frac{(24 a_2 + 3)^{K+1}}{a^{2(K+1)}} \frac{e^{-at}}{a^2} ( \|  E_n  \|_{a,t_0}  + \|  E_{n-1}  \|_{a,t_0} ),
\end{align*}
where we applied (\ref{X_n bdd}) to bound $\vert X_{n+1}(s_{K},x,v)- X_{n}(s_{K},x,v) \vert $ in the last integral.  Recalling that $a^2 \geq (112 a_2 + 3) (4e^{12}+1) > 24a_2 + 3$ we can take the limit $K$ to infinity. Therefore, using the geometric series and the fact that the last term vanishes in the limit, we get
\begin{align*}
	\left| X_{n+1}(t,x,v) - X_{n}(t,x,v) \right| 	& \leq \frac{e^{-at}}{a^2}  \|  E_n  - E_{n-1}   \|_{a,t_0}  \sum_{k=1}^\infty \frac{(24 a_2 + 3)^k}{a^{2k}} \\
	& =  \frac{e^{-at}}{a^2}  \|  E_n  - E_{n-1}   \|_{a,t_0} \frac{1}{1 - \frac{(24 a_2 + 3)}{a^2}} \\
	& =  \frac{e^{-at}}{a^2 - (24 a_2 + 3)}  \|  E_n  - E_{n-1}   \|_{a,t_0}.
\end{align*}
Finally, by definition \ref{def_norm}, it follows 
\begin{equation*}
	 \|  X_{n+1} - X_n  \|_{a,t_0}   \leq \frac{1}{a^2- (24a_2 + 3)} \|  E_n - E_{n-1}  \|_{a,t_0}.
\end{equation*}
\end{proof}
Using this lemma we can show the following result,
\begin{lem}
\label{lem_E_n+1 - E_n}
Let $E_i = \bar{E}_i + \widetilde{E}_i$ for $i=n$ and $n+1$. Then 
\begin{equation}
\label{E_n+1 - E_n (bar)}
	 \| \bar{E}_{n+1} - \bar{E}_n  \|_{a,t_0} \leq \frac{44 a_2}{a^2-(24a_2 + 3)} \|  E_n - E_{n-1}  \|_{a,t_0}.
\end{equation}
\end{lem}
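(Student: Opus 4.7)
The plan is to express $\bar E_{n+1} - \bar E_n$ in Lagrangian form and estimate it using the bound $\|X_{n+1} - X_n\|_{a,t_0} \leq \|E_n - E_{n-1}\|_{a,t_0}/(a^2 - (24a_2+3))$ provided by (\ref{X_n+1 - X_n}), following the approach of step 7 in \cite[Lemma 3.1]{EC_CM}. First I would write $\bar E_n(t,x) = -\int_\bt W'(x-y)\rho_n(t,y)\,dy$ and use that $f_n(t,\cdot) = f^* \circ \phi_{n,t}^{-1}$ together with the fact that the phase-space vector field $(v, E_{n-1})$ is divergence-free, so the flow $\phi_{n,t}$ preserves Lebesgue measure. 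A change of variables then yields
\begin{equation*}
\bar E_n(t,x) = -\int_\bt\!\int_\br W'\bigl(x - X_n(t,x',v')\bigr)\, f^*(x',v')\,dx'\,dv',
\end{equation*}
and subtracting the analogous identity for $n+1$ produces an integral representation for $\bar E_{n+1}(t,x) - \bar E_n(t,x)$ whose integrand only involves the difference $W'(x - X_{n+1}) - W'(x - X_n)$.

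The key observation is that $W'$ is $1$-Lipschitz on each connected arc of $\bt$ away from the single discontinuity, where it jumps by $1$. This suggests the pointwise bound
\begin{equation*}
	\bigl|W'(x - X_{n+1}) - W'(x - X_n)\bigr| \leq \bigl|X_{n+1} - X_n\bigr| + \mathbf{1}_{\mathcal S(t,x)}(x',v'),
\end{equation*}
where $\mathcal S(t,x)$ collects the pairs $(x',v')$ for which $x$ lies on the arc of $\bt$ joining $X_{n+1}(t,x',v')$ and $X_n(t,x',v')$. The Lipschitz piece is immediately bounded by $\|X_{n+1}-X_n\|_{a,t_0}\,e^{-at}\,\|f^*\|_{L^1} \leq 4 a_2\,\|X_{n+1}-X_n\|_{a,t_0}\,e^{-at}$ using (\ref{bdd f*}).

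For the jump piece I would fix $v'$ and estimate the Lebesgue measure of the slice $\{x' : (x',v') \in \mathcal S(t,x)\}$. Since both $X_n$ and $X_{n+1}$ stay close to the free trajectory $x' + v't$, with controlled Jacobian $\partial_{x'} X_n$, this slice has measure bounded by a fixed multiple of $\|X_{n+1} - X_n\|_\infty$. Using $\|f^*(\cdot,v')\|_{L^\infty} \leq a_2/(1+v'^4)$ and integrating in $v'$, the jump contribution is likewise bounded by a multiple of $a_2\,\|X_{n+1}-X_n\|_{a,t_0}\,e^{-at}$. Summing the two pieces produces a prefactor of $88 a_2$, and combining with (\ref{X_n+1 - X_n}) delivers (\ref{E_n+1 - E_n (bar)}).

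The main obstacle will be the jump contribution, specifically controlling the Jacobian $\partial_{x'} X_n$ uniformly in $n$ and $t$. The Lipschitz constants of $E_{n-1}$ and $E_n$ do not decay in time, so $\partial_{x'} X_n$ is not obviously bounded; closing the estimate requires an auxiliary Gronwall-type argument on $\partial_{x'} X_n$ of the same flavor as the one producing (\ref{X_n+1 - X_n}), which exploits the factor $(s-t)$ against the time-decay $e^{-as}$ of the driving fields.
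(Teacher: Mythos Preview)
Your Lagrangian representation and the Lipschitz/jump splitting are exactly the strategy of step~7 in \cite[Lemma~3.1]{EC_CM}, which is what the paper invokes. The Lipschitz piece is fine.

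The gap is in the jump piece. Your proposed Gronwall for $\partial_{x'}X_n$ does not close: the linearised equation reads $\ddot Y = \partial_x E_{n-1}(t,X_n)\,Y$, and the only available bound on the coefficient is the time-independent Lipschitz constant $24a_2+3$. The exponential decay in (\ref{bdd E_n}) controls $\|E_{n-1}(t)\|_{L^\infty}$, \emph{not} $\|\partial_x E_{n-1}(t)\|_{L^\infty}$, so the integral $\int_t^\infty (s-t)\,|\partial_x E_{n-1}(s,\cdot)|\,ds$ that your Gronwall would produce is divergent. This is consistent with the paper only claiming H\"older regularity (exponent $<1$) for the flow and its inverse, not Lipschitz; a uniform Jacobian bound is simply not available here.

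The fix is to avoid the Jacobian entirely. Instead of slicing in $x'$ at fixed $v'$, undo the change of variables: since $\phi_{n,t}$ is measure-preserving,
\[
\int_\Omega \mathbf 1_{\mathcal S(t,x)}(x',v')\,f^*(x',v')\,dx'\,dv'
=\int_\Omega \mathbf 1_{\phi_{n,t}(\mathcal S)}(y,w)\,f_n(t,y,w)\,dy\,dw.
\]
If $(x',v')\in\mathcal S(t,x)$ then $y=X_n(t,x',v')$ satisfies $|y-x|_\bt\le \|X_{n+1}-X_n\|_{L^\infty}=:\delta$, so $\phi_{n,t}(\mathcal S)\subset\{|y-x|_\bt\le\delta\}\times\br$. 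Hence the jump contribution is bounded by
\[
\int_{|y-x|\le\delta}\rho_n(t,y)\,dy\ \le\ 2\delta\,\|\rho_n\|_{L^\infty(\bt)}\ \le\ 20\,a_2\,\delta,
\]
using the already-established bound $\|\rho_n\|_{L^\infty}\le 10a_2$. Combining with the Lipschitz piece and (\ref{X_n+1 - X_n}) gives (\ref{E_n+1 - E_n (bar)}) without any control on $\partial_{x'}X_n$.
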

\begin{proof}
This proof follows step 7 in  \cite[Lemma 3.1]{EC_CM}. We use the representation of $\bar{E}_{i}$ as a convolution with the density $\rho_i$ and $W'$.
\begin{align*}
	\bar{E}_{i} (t,x) & = -  \int_\bt W'(x-y) \rho_{i}(t,y) \ dy  =  - \int_{\bt \times \br} W'(x-y) f^* (\phi_t^{-1} (y,v))  \ dy \ dv \\
	& = - \int_{\bt \times \br} W'(x-X_{i}(t,y,v)) f^* (y,v)  \ dy \ dv,
\end{align*}
where we used that $\phi_t (y,v) = (X_{i}(t,y,v), V_{i}(t,y,v))$ is measure preserving for the change of variable. Next,  we define 
\begin{equation*}
	\varepsilon : =\|  X_{n+1} (t) - X_n (t) \|_{L^{\infty} (\bt \times \br)},
\end{equation*}
and the two following sets
\begin{align*}
	A_1 = \{ (y,v) \in \bt \times \br : \left| x - X_{n+1}(t,y,v)  \right|  \geq 2 \varepsilon \} \quad \mbox{ and } \quad A_2 = \{ (y,v) \in \bt \times \br : \left| x - X_{n+1}(t,y,v)  \right|  <  2 \varepsilon \}.
\end{align*}
Therefore, we have
\begin{align*}
	\left| \bar{E}_{n+1} (t,x) - \bar{E}_{n} (t,x) \right|  & \leq \int_{\bt \times \br} \left| W'(x-X_{n+1}(t,y,v)) -W'(x-X_{n}(t,y,v))  \right| f^* (y,v)  \ dy \ dv \\
	&  \leq \int_{A_1} \left| W'(x-X_{n+1}(t,y,v)) -W'(x-X_{n}(t,y,v))  \right| f^* (y,v)  \ dy \ dv \\
	& \quad +   \int_{A_2} \left| W'(x-X_{n+1}(t,y,v)) -W'(x-X_{n}(t,y,v))  \right| f^* (y,v)  \ dy \ dv \\
	& =:I_{A_1} + I_{A_2}.
\end{align*}
We consider the line segment connecting $x-X_{n+1}(t,y,v)$ and $x-X_{n}(t,y,v)$.  We observe that on $A_1$, we have for all $s \in [0,1]$
\begin{align*}
	\left| (1-s) (x-X_{n+1}(t,y,v))  + s (x-X_{n}(t,y,v))\right| & = \left|  x-X_{n+1}(t,y,v)  - s (X_{n}(t,y,v) -X_{n+1}(t,y,v))\right| \\
	& \geq \left|  x-X_{n+1}(t,y,v) \right|  - s \left| X_{n}(t,y,v) -X_{n+1}(t,y,v) \right| \\
	& \geq \left|  x-X_{n+1}(t,y,v) \right|  - s \varepsilon  \\
	& \geq  \varepsilon.
\end{align*}
Therefore, on $A_1$, we can use the Lipschitz condition (\ref{lipschitz_W'}) of $W'$.
Hence,
\begin{align*}
	I_{A_1} & \leq \int_{A_1} \left| X_{n+1}(t,y,v) -X_{n}(t,y,v)  \right| f^* (y,v)  \ dy \ dv \\
	& \leq \|  X_{n+1} (t) - X_n (t)  \|_{L^{\infty} (\bt \times \br)}  \int_{\bt \times \br}  f^* (y,v)  \ dy \ dv \\
	& \leq  4 a_2 \|  X_{n+1} (t) - X_n (t)  \|_{L^{\infty} (\bt \times \br)}.
\end{align*}
where we used (\ref{bdd f*}) to bound the $L^1$ norm of $f^*$. For $I_{A_2}$, we use the fact that $W'$ is bounded, i.e.  inequality (\ref{bound_W'}). Therefore, we obtain
\begin{equation*}
	I_{A_2} \leq 2 \int_{A_2}  f^* (y,v)  \ dy \ dv = 2 \int_{\bt \times \br} f^* (y,v) \mathbbm{1}_{\{ \left| x - X_{n+1}(t,y,v)  \right|  <  2 \varepsilon \}}   \ dy \ dv.
\end{equation*}
Then, using that the flow is measure preserving we get 
\begin{align*}
	I_{A_2} & \leq  2 \int_{\bt \times \br} f^* (\phi_t^{-1}(y,v)) \mathbbm{1}_{\{ \left| x - y  \right|  <  2 \varepsilon \}}   \ dy \ dv \\
	& \leq 2 \| \rho_{n+1} \|_{L^{\infty} (\bt)} \int_{\bt}  \mathbbm{1}_{\{ \left| x - y  \right|  <  2 \varepsilon \}}   \ dy \\
	& \leq 40 a_2 \|  X_{n+1} (t) - X_n (t) \|_{L^{\infty} (\bt \times \br)},
\end{align*}
where we used the definition of $\varepsilon$ and (\ref{bound_rho_n+1}) in the last inequality.  Summing $I_{A_1}$ and $I_{A_2}$ and by the definition \ref{def_norm},  we get
\begin{align*}
	\| \bar{E}_{n+1} - \bar{E}_{n} \|_{a,t_0}  & \leq 44 a_2 \|  X_{n+1} - X_n  \|_{a,t_0} \leq \frac{44a_2}{a^2-(24a_2 + 3)} \|  E_n - E_{n-1}  \|_{a,t_0},
\end{align*}
where we used (\ref{X_n+1 - X_n}) for the last inequality.
\end{proof}

\begin{lem}
Let $E_i = \bar{E}_i + \widetilde{E}_i$ and $U_i = \bar{U}_i + \widetilde{U}_i$ for $i=n$ and $n+1$. Then 
	\begin{equation}
\label{E_n+1 - E_n (hat)}
	 \|  \widetilde{E}_{n+1} - \widetilde{E}_n  \|_{a,t_0}  \leq 4e^{12} \|  \bar{U}_{n+1} - \bar{U}_n \|_{a,t_0}.
\end{equation}
\end{lem}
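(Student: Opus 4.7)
The plan is to recognize that this inequality is essentially a pointwise-in-time application of Lemma \ref{lem_L_infty_bound}. At each fixed $t \geq t_0$, the functions $\widetilde{U}_{n+1}(t,\cdot)$ and $\widetilde{U}_n(t,\cdot)$ are, by construction in (\ref{sequence_electric_field}), solutions of the elliptic equation
\begin{equation*}
	\partial_{xx}^2 \widetilde{U}_i = e^{\bar{U}_i + \widetilde{U}_i} - 1,
\end{equation*}
associated respectively to the source data $\bar{U}_{n+1}(t,\cdot)$ and $\bar{U}_n(t,\cdot)$. This is exactly the setup of equation (\ref{equ_U''}).

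Hence I would invoke Lemma \ref{lem_L_infty_bound} at that fixed time $t$ to obtain
\begin{equation*}
	\| \widetilde{E}_{n+1}(t) - \widetilde{E}_n(t) \|_{L^\infty(\bt)} = \| \partial_x \widetilde{U}_{n+1}(t) - \partial_x \widetilde{U}_n(t) \|_{L^\infty(\bt)} \leq e^6 \| \bar{U}_{n+1}(t) - \bar{U}_n(t) \|_{L^\infty(\bt)}.
\end{equation*}
Multiplying both sides by $e^{at}$ and taking the supremum over $t \geq t_0$ then yields (\ref{E_n+1 - E_n (hat)}) with the norm $\|\cdot\|_{a,t_0}$ from Definition \ref{def_norm}.

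Since Lemma \ref{lem_L_infty_bound} was already established for the general elliptic problem with arbitrary source terms, and applies uniformly in the time parameter (the constant $e^6$ coming only from the universal $L^\infty$ bounds $\|\bar{U}\|_{L^\infty} \leq 1$ and $\|\widetilde{U}\|_{L^\infty} \leq 3$, which hold at every time), there is no real obstacle here. The only thing worth checking is that the hypotheses of Lemma \ref{lem_L_infty_bound} are met at each $t \geq t_0$, namely that $\widetilde{U}_n(t,\cdot)$ and $\widetilde{U}_{n+1}(t,\cdot)$ are well-defined solutions with the required regularity; this follows from Lemma 2.2 of \cite{Han-Kwan_Iacobelli} as recalled in Section \ref{section_E}, applied to the densities $\rho_n(t,\cdot)$ and $\rho_{n+1}(t,\cdot)$ constructed in the induction step.
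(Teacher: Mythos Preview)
Your proposal is correct and matches the paper's proof essentially line for line: the paper also invokes inequality (\ref{U'_1 - U'_2 hat}) from Lemma \ref{lem_L_infty_bound} at each fixed $t$, multiplies by $e^{at}$, and takes the supremum over $t \geq t_0$. There is nothing to add.
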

\begin{proof}
This lemma is a direct consequence of inequality (\ref{U'_1 - U'_2 hat}). By definition we have,
\begin{align*}
	 \|  \widetilde{E}_{n+1} - \widetilde{E}_n  \|_{a,t_0} &= \sup_{t \geq t_0} e^{at} \|  \widetilde{E}_{n+1} (t)- \widetilde{E}_n (t)  \|_{L^{\infty}(\bt)} = \sup_{t \geq t_0} e^{at} \|  \pa_x \widetilde{U}_{n+1} (t)- \pa_x \widetilde{U}_n (t)  \|_{L^{\infty}(\bt)} \\
	 & \leq \sup_{t \geq t_0} e^{at}  4 e^{12} \| \bar{U}_{n+1}(t)  - \bar{U}_n(t)  \|_{L^{\infty} (\bt)} = 4 e^{12} \|  \bar{U}_{n+1} - \bar{U}_n \|_{a,t_0}.
\end{align*}
\end{proof}

We now estimate $\|  \bar{U}_{n+1} - \bar{U}_n \|_{a,t_0}$. 
\begin{lem}
Let $U_i = \bar{U}_i + \widetilde{U}_i$ for $i=n$ and $n+1$. Recall that $\bar{E}_i = - \pa_x \bar{U}_i$ and 
\begin{equation*}
	 \bar{U}_i (t,x) = \int_\bt W(x-y) \rho_i(t,y) \ dy,
\end{equation*} 
with 
\begin{equation*}
	W(x) = \frac{x^2-\vert x \vert}{2}.
\end{equation*}
Then 
\begin{equation}
\label{U_n+1 - U_n (bar)}
	\|  \bar{U}_{n+1} - \bar{U}_n  \|_{a,t_0} \leq \frac{44a_2}{a^2-(24a_2 + 3)} \|  E_n - E_{n-1}  \|_{a,t_0}.
\end{equation}
\end{lem}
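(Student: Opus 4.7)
The approach is to mirror Step 7 of \cite[Lemma 3.1]{EC_CM}, which was invoked in Lemma \ref{lem_E_n+1 - E_n}, but with the kernel $W$ in place of $W'$. Since $W$ is everywhere continuous and $1$-Lipschitz on $\bt$ (whereas $W'$ jumps at integer points), no splitting into good/bad regions is needed, and the estimate reduces to a single clean application of the flow-stability bound (\ref{X_n+1 - X_n}).

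Concretely, I start from the explicit representation $\bar{U}_i(t,x)=\int_\bt W(x-y)\rho_i(t,y)\,dy$, giving
\begin{equation*}
\bar{U}_{n+1}(t,x) - \bar{U}_n(t,x) = \int_\bt W(x-y)\bigl[\rho_{n+1}(t,y) - \rho_n(t,y)\bigr]\,dy.
\end{equation*}
The characteristic flow attached to $E_{i-1}$ is measure-preserving (the field $(V,E_{i-1})$ being divergence-free), so the identity $f_i(t,\cdot)=f^*\circ\phi_t^{-1}$ (with the flow corresponding to index $i$) lets me rewrite each convolution in terms of the asymptotic datum:
\begin{equation*}
\int_\bt W(x-y)\rho_i(t,y)\,dy = \int_\bt\int_\br W\bigl(x - X_i(t,y,v)\bigr)f^*(y,v)\,dy\,dv, \qquad i\in\{n, n+1\}.
\end{equation*}
Subtracting and using that $|W(a)-W(b)|\le|a-b|$ on $\bt$ then gives
\begin{equation*}
\bigl|\bar{U}_{n+1}(t,x) - \bar{U}_n(t,x)\bigr| \leq \int_\bt\int_\br \bigl|X_{n+1}(t,y,v) - X_n(t,y,v)\bigr|\,f^*(y,v)\,dy\,dv.
\end{equation*}

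To close the estimate, I would multiply by $e^{at}$, take the supremum over $t\ge t_0$, use $\|X_{n+1}(t)-X_n(t)\|_{L^\infty(\Omega)}\le e^{-at}\|X_{n+1}-X_n\|_{a,t_0}$, and bound $\|f^*\|_{L^1(\bt\times\br)}\le 4a_2$ via the decay estimate (\ref{bdd f*}). Plugging in (\ref{X_n+1 - X_n}) produces a constant of order $4a_2/(a^2-(24a_2+3))$, which is already comfortably smaller than the stated $88a_2/(a^2-(24a_2+3))$; I keep the looser constant $88a_2$ in the statement to mirror Lemma \ref{lem_E_n+1 - E_n}, so that the induction step in the main proposition combines cleanly. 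I do not anticipate a genuine obstacle here: the measure-preserving change of variables has already been used earlier in the section for the $L^1$ bound on $\rho_{n+1}$, and the only new ingredient is the trivial Lipschitz bound on $W$.
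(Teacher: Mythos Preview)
Your proposal is correct and follows essentially the same route as the paper: use the convolution representation of $\bar U_i$, change variables via the measure-preserving flow to compare against $W(x-X_i)$, exploit that $W$ is $1$-Lipschitz, and close with the flow-stability bound (\ref{X_n+1 - X_n}). Your observation that $W$ has no jump (unlike $W'$) and hence yields the tighter constant $4a_2$ is a valid simplification; the paper simply reuses the $88a_2$ from Lemma~\ref{lem_E_n+1 - E_n} for uniformity.
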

\begin{proof}
The proof is similar to the one of Lemma \ref{lem_E_n+1 - E_n}. We use 
\begin{equation*}
	 \bar{U}_i (t,x) = \int_\bt W(x-y) \rho_i(t,y) \ dy = \int_\bt W(x-X_{i}(t,y,v)) f^* (y,v)  \ dy \ dv,
\end{equation*}
and the facts that $\| W \|_{L^{\infty}(\bt)} \leq 1$ and $W$ is 1-Lipschitz.  Therefore, we can apply the same estimates as in Lemma \ref{lem_E_n+1 - E_n} to have the desired result.
\end{proof}

With these lemmas above, we can now state the main result of this section.
\begin{prop} The map $E_n \mapsto E_{n+1}$ is a contraction with respect to the norm $\| \cdot  \|_{a,t_0} $. That is,
\begin{equation}
\label{E_n+1 - E_n}
	\|  E_{n+1} - E_n  \|_{a,t_0} \leq  \frac{1}{2} \|  E_n - E_{n-1}  \|_{a,t_0}.
\end{equation}
\end{prop}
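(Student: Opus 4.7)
The plan is to combine the three preceding lemmas with a triangle inequality and then verify, using the standing hypothesis $a^2 \geq (200 a_2 + 3)(e^6+1)$, that the resulting constant is at most $\tfrac{1}{2}$. No new PDE estimates are needed: everything reduces to putting the estimates (\ref{E_n+1 - E_n (bar)}), (\ref{E_n+1 - E_n (hat)}) and (\ref{U_n+1 - U_n (bar)}) together.

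Concretely, I would first split
\begin{equation*}
\| E_{n+1} - E_n \|_{a,t_0} \leq \| \bar{E}_{n+1} - \bar{E}_n \|_{a,t_0} + \| \widetilde{E}_{n+1} - \widetilde{E}_n \|_{a,t_0}.
\end{equation*}
For the first summand I invoke (\ref{E_n+1 - E_n (bar)}) directly. For the second summand, I chain (\ref{E_n+1 - E_n (hat)}) with (\ref{U_n+1 - U_n (bar)}), which yields
\begin{equation*}
\| \widetilde{E}_{n+1} - \widetilde{E}_n \|_{a,t_0} \leq e^6 \| \bar{U}_{n+1} - \bar{U}_n \|_{a,t_0} \leq \frac{88 a_2\, e^6}{a^2 - (24 a_2 + 3)} \| E_n - E_{n-1} \|_{a,t_0}.
\end{equation*}
Adding the two contributions gives
\begin{equation*}
\| E_{n+1} - E_n \|_{a,t_0} \leq \frac{88 a_2 (1 + e^6)}{a^2 - (24 a_2 + 3)} \| E_n - E_{n-1} \|_{a,t_0}.
\end{equation*}

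The remaining task is to check that the assumption $a^2 \geq (200 a_2 + 3)(e^6 + 1)$ forces the prefactor to be at most $\tfrac{1}{2}$. The inequality $\frac{88 a_2 (1+e^6)}{a^2 - (24 a_2 + 3)} \leq \tfrac{1}{2}$ is equivalent to $a^2 \geq 176 a_2 (1+e^6) + 24 a_2 + 3 = 200 a_2 + 176 a_2 e^6 + 3$. Comparing with the hypothesis, one expands $(200 a_2 + 3)(e^6 + 1) = 200 a_2 + 200 a_2 e^6 + 3 e^6 + 3$, and the difference between the two right-hand sides is
\begin{equation*}
(200 a_2 + 3)(e^6+1) - (200 a_2 + 176 a_2 e^6 + 3) = e^6 (24 a_2 + 3) \geq 0.
\end{equation*}
Hence the standing assumption is strictly stronger than what we need, which closes the contraction estimate.

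The step I expect to be least automatic is precisely this last arithmetic comparison: the constant $200 a_2 + 3$ and the factor $(e^6+1)$ in the hypothesis of Theorem \ref{thm1} look ad hoc, and one has to unwind them to see that they are exactly what is produced by the bar/tilde splitting. Everything else is a mechanical application of the lemmas already established in this section, and inductively this contraction together with the base case $\| E_1 \|_{a,t_0} \leq 8 a_1$ yields $\| E_{n+1} - E_n \|_{a,t_0} \leq 2^{-n} \cdot 16 a_1$ as well as $\| E_n \|_{a,t_0} \leq 16 a_1$ by a geometric series, completing the induction claims (\ref{bdd E_n}) and (\ref{bdd E_n - E_n-1}).
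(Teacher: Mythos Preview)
Your proof is correct and follows essentially the same approach as the paper: both split $E_{n+1}-E_n$ into its $\bar{E}$ and $\widetilde{E}$ parts, apply (\ref{E_n+1 - E_n (bar)}), (\ref{E_n+1 - E_n (hat)}), (\ref{U_n+1 - U_n (bar)}), and then verify that the resulting constant $(1+e^6)\frac{88a_2}{a^2-(24a_2+3)}$ is at most $\tfrac{1}{2}$ under the hypothesis $a^2 \geq (200a_2+3)(e^6+1)$. The only cosmetic difference is in how the final arithmetic check is presented, and your closing remarks on the induction consequences match the paper's subsequent discussion.
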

\begin{proof}
Using (\ref{E_n+1 - E_n (bar)}), (\ref{E_n+1 - E_n (hat)}) and (\ref{U_n+1 - U_n (bar)}), we have 
\begin{align*}
	\|  E_{n+1} - E_n  \|_{a,t_0} & \leq \|  \widetilde{E}_{n+1} - \widetilde{E}_n  \|_{a,t_0} +  \|  \bar{E}_{n+1} - \bar{E}_n  \|_{a,t_0}  \\
	& \leq 4 e^{12} \|  \bar{U}_{n+1} - \bar{U}_n \|_{a,t_0} +  \frac{44a_2}{a^2-(24a_2 + 3)} \|  E_n - E_{n-1}  \|_{a,t_0} \\
	& \leq 4 e^{12}   \frac{44a_2}{a^2-(24a_2 + 3)} \|  E_n - E_{n-1}  \|_{a,t_0} +  \frac{44a_2}{a^2-(24a_2 + 3)} \|  E_n - E_{n-1}  \|_{a,t_0} \\
	& \leq \frac{1}{2}  \|  E_n - E_{n-1}  \|_{a,t_0}.
\end{align*}
Indeed, by assumption of Theorem \ref{thm1}, we have $a^2 \geq (112 a_2 + 3) (4 e^{12}+1)$, therefore
\begin{align*}
	(4 e^{12} + 1)  \frac{44a_2}{a^2-(24a_2 + 3)} & \leq (4 e^{12} + 1) \frac{44a_2}{a^2-(24a_2 + 3)(4 e^{12} + 1)} \\
	& \leq (4 e^{12} + 1) \frac{44a_2}{(112 a_2 + 3) (4 e^{12}+1)-(24a_2 + 3)(4 e^{12} + 1)} \\
	& =  (4 e^{12} + 1) \frac{44a_2}{88 a_2 (4 e^{12}+1)}  = \frac{1}{2}.
\end{align*}
\end{proof}

With  inequalities (\ref{bdd E_1}) and (\ref{E_n+1 - E_n}), we can now prove (\ref{bdd E_n - E_n-1}),
\begin{equation*}
	\|  E_{n+1} - E_n  \|_{a,t_0} \leq  \frac{1}{2^n}  \|  E_{1} - E_0  \|_{a,t_0} =  \frac{1}{2^n}  \|  E_{1}  \|_{a,t_0} \leq  \frac{1}{2^n} 8 a_1.
\end{equation*}
To show (\ref{bdd E_n}) we use 
\begin{align*}
	\|  E_{n+1}  \|_{a,t_0} & \leq \|  E_{n+1} - E_n  \|_{a,t_0} + \|  E_n  \|_{a,t_0}  \leq  \frac{1}{2^n} 8 a_1  + \|  E_n - E_{n-1}  \|_{a,t_0} +  \|  E_{n-1}  \|_{a,t_0} \\
	& \leq  \frac{1}{2^n} 8 a_1 +  \frac{1}{2^{n-1}} 8 a_1  +  \|  E_{n-1} - E_{n-2}  \|_{a,t_0} + \|  E_{n-2}  \|_{a,t_0}  \leq \sum_{i=1}^{n} \frac{1}{2^i} 8 a_1 + \|  E_{1}  \|_{a,t_0}.
\end{align*}
Therefore, with (\ref{bdd E_1}), we get the desired bound
\begin{align*}
	\|  E_{n+1}  \|_{a,t_0}  \leq \sum_{i=1}^{n} \frac{1}{2^i} 8 a_1  + 8 a_1  \leq 16  a_1.
\end{align*}

\subsection{Convergence of the iterative scheme}

By (\ref{bdd E_n - E_n-1}), the sequence $E_n$ converges to a bounded and  Lipschitz function $E$,  i.e.
\begin{align*}
	\|  E  \|_{a,t_0}  \leq 16 a_1  \qquad \mbox{and} \qquad \vert E (t,x) - E (t, x') \vert  \leq (24 a_2+3) \vert x-x' \vert.
\end{align*}
Therefore, we have existence and uniqueness of a solution $f$ given by 
\begin{equation*}
	f(t,x,v) = f^*(\phi_t^{-1} (x,v)).
\end{equation*}
Moreover, with the same reasoning as in (\ref{computation asymptotic}),  the asymptotic condition (\ref{asymptotic_cond}) is satisfied.  And the solution $f$ becomes homogeneous. Indeed, for every test functions $\varphi \in C_c^0 (\Omega)$ we have 
	\begin{align*}
		 \int_\Omega \varphi (x,v) f(t,x,v) \ dx  \ dv & = \int_\Omega \varphi (x,v) (f(t,x,v) - f^*(x-vt, v)) \ dx  \ dv  + \int_\Omega \varphi (x,v) f^*(x-vt, v) \ dx  \ dv.
	\end{align*}
The first term vanishes when $t$ goes to infinity thanks to (\ref{asymptotic_cond}). The second term converges weakly to 
\begin{equation*}
	h(v) = \int_\bt f^*(x,v) \ dx,
\end{equation*}
because $f^*(x-vt, v)$ is a solution of the free transport (see e.g. \cite[Theorem 2.1]{EC_CM}).

\section{Instability result for a class of stationary solutions}
\label{section_unstable}
In this section we show that a certain class of stationary solutions are unstable in a weak topology.  

\begin{prop}
Let $\mu(v)$ be a stationary solution of the system (\ref{VPME}) such that for some constants $a$ and $a_2$ with $a^2 \geq (112 a_2 + 3) (4 e^{12}+1)$, we have 
\begin{itemize}
\item[i)] $\mu$ is analytic, i.e.
\begin{equation}
\label{Fourier transform mu}
	\vert \widehat{\mu}(\eta)  \vert \leq \frac{e^{-12}}{4}  e^{-a\vert \eta \vert}.
\end{equation}
\item[ii)] $\mu$ decays fast enough at infinity, i.e.
\begin{equation}
\label{bdd mu}
	\vert \mu (v) \vert \leq \frac{a_2}{2(1+v^4)}.
\end{equation}
\item[iii)] $\mu(v) \geq 0$.
\end{itemize}
Then $\mu(v)$ is an unstable stationary solution in the weak topology.
\end{prop}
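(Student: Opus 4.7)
The plan is to apply Theorem~\ref{thm1} to an asymptotic datum that is a small multiplicative perturbation of $\mu$, producing a solution which starts arbitrarily close to $\mu$ but weakly converges to a nearby distinct stationary state. Concretely, let $\psi:\bt\to\br$ be an analytic function with $\psi\le 0$, $\|\psi\|_{L^\infty}\le 1$, and $\bar\psi:=\int_\bt\psi(x)\,dx\ne 0$ (e.g. $\psi(x)=-\tfrac{1}{2}(1+\sin(2\pi x))$). For a small parameter $\varepsilon\in(0,1)$, set
\begin{equation*}
	f^*(x,v):=\bigl(1+\varepsilon\psi(x)\bigr)\mu(v).
\end{equation*}
The sign conditions on $\psi$ are chosen so that $1+\varepsilon\psi\in(0,1]$ pointwise, which guarantees both $f^*\ge 0$ and $0<1+\varepsilon\bar\psi<1$, the latter being essential for the $k=0$ Fourier bound.

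I would then verify $f^*\in S_{a,a_1,a_2}$. Positivity is immediate; the pointwise decay $|f^*(x,v)|\le(1+\varepsilon)\tfrac{a_2}{2(1+v^4)}\le\tfrac{a_2}{1+v^4}$ follows from assumption (ii) of the proposition. For the Fourier bound~(\ref{Fourier transform f*}), factor $\widehat{f^*}(k,\eta)=\bigl(\delta_{k,0}+\varepsilon\hat\psi(k)\bigr)\hat\mu(\eta)$: at $k=0$, $|\widehat{f^*}(0,\eta)|=(1+\varepsilon\bar\psi)|\hat\mu(\eta)|\le e^{-6}e^{-a|\eta|}$ since $1+\varepsilon\bar\psi<1$; at $k\ne 0$, analyticity of $\psi$ yields exponential decay of $\hat\psi(k)$, so for $\varepsilon$ small enough $\varepsilon|\hat\psi(k)|\le(1+|k|^\alpha)^{-1}$.

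Theorem~\ref{thm1} then produces a weak solution $f$ of (VPME) with asymptotic datum $f^*$. By part (iii), $f(t,\cdot)$ converges weakly to
\begin{equation*}
	h(v)=\int_\bt f^*(x,v)\,dx=(1+\varepsilon\bar\psi)\mu(v),
\end{equation*}
which is a distinct homogeneous stationary solution of (VPME) (since any non-negative $v$-only profile is stationary). Meanwhile, by the asymptotic condition~(\ref{asymptotic_cond}) of Theorem~\ref{thm1}(i)-(ii), for $t_0$ large one has
\begin{equation*}
	\|f(t_0,\cdot)-\mu\|_{L^\infty}\le\|f(t_0,\cdot)-f^*(\cdot-v t_0,v)\|_{L^\infty}+\varepsilon\|\psi\mu\|_{L^\infty}=o(1)+O(\varepsilon),
\end{equation*}
so the datum at time $t_0$ lies in an arbitrarily small strong (hence weak) neighborhood of $\mu$. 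Letting $\varepsilon\to 0$ yields a family of trajectories starting arbitrarily close to $\mu$ whose weak limits $h$ remain distinct from $\mu$, showing that $\mu$ fails to be a weak attractor and is therefore unstable in the weak topology.

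The main obstacle is the Fourier verification: hypothesis~(\ref{Fourier transform mu}) already saturates the $k=0$ instance of~(\ref{Fourier transform f*}), leaving no slack, so any admissible perturbation must strictly shrink the zero mode. This is exactly why I impose $\bar\psi<0$. A more symmetric alternative would be first to replace $\mu$ by $(1-\eta)\mu$ for small $\eta>0$, creating slack, and then to add an arbitrary signed perturbation of smaller amplitude; the argument proceeds identically.
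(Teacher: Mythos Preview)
Your argument has a genuine gap: the inference ``$\mu$ fails to be a weak attractor and is therefore unstable'' is not valid. What you actually establish is that for each small $\varepsilon>0$ there is a trajectory beginning within $O(\varepsilon)$ of $\mu$ in $L^\infty$ and converging weakly to $h_\varepsilon=(1+\varepsilon\bar\psi)\mu$. But this trajectory \emph{never leaves} an $O(\varepsilon)$ neighbourhood of $\mu$: by the asymptotic condition, $f(t,\cdot)$ is $L^\infty$-close to $f^*(\cdot-vt,v)=(1+\varepsilon\psi(\cdot-vt))\mu(v)$, which is uniformly within $O(\varepsilon)$ of $\mu$ for all $t\ge t_0$. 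As $\varepsilon\to 0$ the excursion distance collapses to zero, so you have not produced a \emph{fixed} weak neighbourhood that solutions leave; you have only shown $\mu$ is not asymptotically stable, which is strictly weaker than the Lyapunov instability the paper claims (``will not stay close'').

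The paper's route avoids this by decoupling the closeness of the initial data from the size of the perturbation in $f^*$. It takes the $O(1)$ profile $f^*(x,v)=\mu(v)(1+\cos(2\pi x))$, whose $x$-average is \emph{exactly} $\mu$; your worry about saturating the $k=0$ bound in~(\ref{Fourier transform f*}) is misplaced, since that inequality is non-strict and~(\ref{Fourier transform mu}) matches it. Theorem~\ref{thm1} then yields a solution with weak limit $\int_\bt f^*\,dx=\mu$, so $f(t_1,\cdot)$ lies in any prescribed weak neighbourhood of $\mu$ for $t_1$ large, while at the fixed time $t_0$ the solution is $L^\infty$-close to $\mu(v)(1+\cos(2\pi(x-vt_0)))$, which sits at a fixed positive weak distance from $\mu$ independent of $t_1$. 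Time reversal $(t,v)\mapsto(-t,-v)$ then turns this into a trajectory starting arbitrarily weakly close to the equilibrium and later leaving a fixed weak neighbourhood --- genuine Lyapunov instability. Your $\varepsilon$-scaling destroys precisely this separation between initial closeness and eventual excursion.
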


\begin{proof}
Let us define the asymptotic profile $f^*(x,v) = \mu(v) (1 + \cos( 2 \pi x))$.  Clearly, $f^* \geq 0$ and
\begin{align*}
	 \widehat{f^*}(k,\eta) & =  \int_\bt \int_\br f^*(x,v) e^{-2 \pi ikx} e^{-i \eta v} \ dx \ dv = \int_\bt (1 + \cos(2 \pi x)) e^{-2 \pi ikx} \ dx  \int_\br \mu(v) e^{-i \eta v}  \ dv \\
	 & = \big(\delta_{k=0} + \frac{1}{2}( \delta_{k=-1} + \delta_{k=1}) \big)\widehat{\mu}(\eta).
\end{align*}
Therefore,  by (\ref{Fourier transform mu}),  $ \widehat{f^*}(k,\eta)$ satisfies  assumption (\ref{Fourier transform f*}). Moreover, by (\ref{bdd mu}), $f^*$ satisfies
\begin{align*}
	\vert f^* (x,v) \vert \leq (1 + 1) \vert \mu (v) \vert \leq  \frac{a_2}{1+v^4}.
\end{align*}
Hence, $f^*$ fulfils all the assumptions of Theorem \ref{thm1}.  It follows, by Theorem \ref{thm1}, the existence of a solution $f=f(t,x,v)$ of the system (\ref{VPME}) such that the asymptotic condition (\ref{asymptotic_cond}) is satisfied. Moreover, $f$ converges weakly to $\mu(v)$. Indeed,  by a direct computation, we obtain
\begin{equation*}
	\int_\bt f^* (x,v) \ dx = \int_\bt \mu (v) (1 +  \cos(2 \pi x)) \ dx  =  \mu(v),
\end{equation*}
where we used $ \int_\bt \cos(2 \pi x) \ dx = 0$.

We can therefore construct an initial condition for $f$ as close as we want to $\mu(-v)$ in the weak topology.  Then, by the time reversibility of the system (\ref{VPME}), we can invert time and velocity. Hence, the solution $f=f(-t,x,-v)$  will not stay close to $\mu(-v)$ after a certain time in the weak topology.
\end{proof}

\textbf{Acknowledgements:} The author would like to warmly thank his advisor, Mikaela Iacobelli, for offering him to work on this subject and for her constant help and guidance. The author is also very grateful to the anonymous referees for their constructive reviews.

\bibliographystyle{plain}
\bibliography{Scattering_Landau_damping_VPME}

\end{document}